\newcommand{\KK}{\mathrm{K}}
\newcommand{\HHH}[1]{\mathrm{H}({#1})}
\newcommand{\seq}[2]{\langle #1 \mid #2 \rangle}
\newcommand{\anf}[1]{{\text{``}\hspace{0.3ex}{#1}\hspace{0.3ex}\text{''}}}
\newcommand{\id}{\operatorname{id}}
\newcommand{\cof}[1]{\operatorname{cof}{({#1})}}
\newcommand{\otp}[1]{{{\rm{otp}}\left(#1\right)}}
\newcommand{\On}{{\mathrm{On}}}
\newcommand{\Lim}{{\mathrm{Lim}}}
\newcommand{\ZFC}{{\sf ZFC}}
\newcommand{\map}[3]{{#1}:{#2}\longrightarrow{#3}}
\newcommand{\Set}[2]{\{{#1}~\vert~{#2}\}}
\newcommand{\ran}[1]{{{\rm{ran}}(#1)}}
\newcommand{\dom}[1]{{{\rm{dom}}(#1)}}
\newcommand{\meas}[1]{{{\rm{meas}}(#1)}}
\newcommand{\lp}[1]{{{\rm{lp}}(#1)}}
\newcommand{\VV}{{\rm{V}}}
\newcommand{\JJ}{{\rm{J}}}
\newcommand{\LL}{{\rm{L}}}
\newcommand{\betrag}[1]{\vert{#1}\vert}
\newcommand{\crit}[1]{{{\rm{crit}}(#1)}}
\newcommand{\rank}[2]{{\rm{rnk}}_{{#2}}({#1})}
\newcommand{\tc}[1]{{\rm{tc}}({#1})}
\newcommand{\POT}[1]{{\mathcal{P}}({#1})}
\newcommand{\Ult}[2]{{\rm{Ult}}({#1},{#2})}
\newcommand{\GCH}{{\rm{GCH}}}
\newtheorem{theorem}{Theorem}[section]
\newtheorem{lemma}[theorem]{Lemma}
\newtheorem{corollary}[theorem]{Corollary}
\newtheorem{question}{Question}
\newtheorem*{claim*}{Claim}
\newtheorem*{subclaim*}{Subclaim}
\theoremstyle{definition}
\newtheorem{definition}[theorem]{Definition}
\theoremstyle{remark}
\newenvironment{enumerate-(a)}{\begin{enumerate}[label={\upshape (\alph*)}, leftmargin=2pc]}{\end{enumerate}}
\newenvironment{enumerate-(a)-r}{\begin{enumerate}[label={\upshape (\alph*)}, leftmargin=2pc,resume]}{\end{enumerate}}
\newenvironment{enumerate-(A)}{\begin{enumerate}[label={\upshape (\Alph*)}, leftmargin=2pc]}{\end{enumerate}}
\newenvironment{enumerate-(A)-r}{\begin{enumerate}[label={\upshape (\Alph*)}, leftmargin=2pc,resume]}{\end{enumerate}}
\newenvironment{enumerate-(i)}{\begin{enumerate}[label={\upshape (\roman*)}, leftmargin=2pc]}{\end{enumerate}}
\newenvironment{enumerate-(i)-r}{\begin{enumerate}[label={\upshape (\roman*)}, leftmargin=2pc,resume]}{\end{enumerate}}
\newenvironment{enumerate-(I)}{\begin{enumerate}[label={\upshape (\Roman*)}, leftmargin=2pc]}{\end{enumerate}}
\newenvironment{enumerate-(I)-r}{\begin{enumerate}[label={\upshape (\Roman*)}, leftmargin=2pc,resume]}{\end{enumerate}}
\newenvironment{enumerate-(1)}{\begin{enumerate}[label={\upshape (\arabic*)}, leftmargin=2pc]}{\end{enumerate}}
\newenvironment{enumerate-(1)-r}{\begin{enumerate}[label={\upshape (\arabic*)}, leftmargin=2pc,resume]}{\end{enumerate}}
\begin{document}

\author{Philipp L\"ucke} 
\address{Philipp L\"ucke, Mathematisches Institut, Universit\"at Bonn,
Endenicher Allee 60, 53115 Bonn, Germany}
\email{pluecke@math.uni-bonn.de}
\urladdr{}

\author{Philipp Schlicht}
\address{Philipp Schlicht, Mathematisches Institut, Universit\"at Bonn,
Endenicher Allee 60, 53115 Bonn, Germany}
\email{schlicht@math.uni-bonn.de}
\urladdr{}

\subjclass[2010]{Primary 03E47; Secondary 03E45, 03E55} 

\keywords{$\Sigma_1$-definability, definable wellorders, measurable cardinals, iterated ultrapowers, inner models of measurability}

\thanks{During the preparation of this paper, both authors were partially supported by DFG-grant LU2020/1-1. 
The main results of this paper were obtained while the authors were participating in the \emph{Intensive Research Program on Large Cardinals and Strong Logics} at the Centre de Recerca Matem\`{a}tica in Barcelona during the fall of 2016. The authors would like to thank the organizers for the opportunity to participate in the program. 
Moreover, the authors would like to thank Peter Koepke and Philip Welch for helpful discussions on the topic of the paper.}

\title[Measurable cardinals and good $\Sigma_1(\kappa)$-wellorderings]{Measurable cardinals and good $\mathbf{\Sigma_1({\boldsymbol \kappa})}$-wellorderings}

\begin{abstract} 
 We study the influence of the existence of large cardinals on the existence of wellorderings of power sets of infinite cardinals $\kappa$ with the property that the collection of all initial segments of the wellordering is definable by a $\Sigma_1$-formula with parameter $\kappa$. 
A short argument shows that the existence of a measurable cardinal $\delta$ implies that such wellorderings do not exist at $\delta$-inaccessible cardinals of cofinality not equal to $\delta$ and their successors.  
In contrast, our main result shows that these wellorderings exist at all other uncountable cardinals in the minimal model containing a measurable cardinal. In addition, we show that measurability is the smallest large cardinal property that interferes with the existence of such wellorderings at uncountable cardinals and we generalize the above result to the minimal model containing two measurable cardinals. 
\end{abstract}

\maketitle


\section{Introduction}\label{section:intro}

We study the interplay between the existence of large cardinals and the existence of very simply definable wellorders of power sets of certain infinite cardinals. In this paper, we focus on the following type of definable wellorders:

\begin{definition}
 Fix sets $y_0,\ldots,y_{n-1}$. 
 \begin{enumerate}
  \item A set $X$ is \emph{$\Sigma_1(y_0,\ldots,y_{n-1})$-definable} if there is a $\Sigma_1$-formula $\varphi(v_0,\ldots,v_n)$ satisfying $$X ~ = ~ \Set{x}{\varphi(x,y_0,\ldots,y_{n-1})}.$$

  \item A wellordering $\lhd$ of a set $X$ is a \emph{good $\Sigma_1(y_0,\ldots,y_{n-1})$-wellordering of $X$} if the set $$I(\lhd) ~ = ~ \Set{\Set{x\in X}{x\lhd y}}{y\in X}$$ of all proper initial segments of $\lhd$ is $\Sigma_1(y_0,\ldots,y_{n-1})$-definable. 
 \end{enumerate}  
\end{definition}

Note that, if $\lhd$ is a good $\Sigma_1(y_0,\ldots,y_{n-1})$-wellordering of a $\Sigma_1(y_0,\ldots,y_{n-1})$-definable set $X$, then $\lhd$ is also $\Sigma_1(y_0,\ldots,y_{n-1})$-definable, because $\lhd$ consists of all pairs $\langle x,y\rangle$ in $X\times X$ with the property that there is an $A\in I(\lhd)$ with $x\in A$ and $y\notin A$. Moreover, given a good $\Sigma_1(y)$-wellordering $\lhd$, the statement \anf{\emph{$x$ is the $\alpha$-th element of $\lhd$}} can be expressed by a $\Sigma_1$-statement with parameters $\alpha$, $x$ and $y$. In particular, if $\kappa$ is an infinite cardinal and $\lhd$ is a good $\Sigma_1(y)$-wellordering of $\POT{\kappa}$ with $y\in\HHH{\kappa^+}$, then the $\Sigma_1$-Reflection Principle implies that $\lhd$ has order-type $\kappa^+$ and hence the $\GCH$ holds at $\kappa$. Classical results of G\"odel show that in his constructible universe $\LL$, there is a good $\Sigma_1(\kappa)$-wellordering of $\POT{\kappa}$ for every infinite cardinal $\kappa$. In contrast, the results of \cite{Lightface} show that stronger large cardinal axioms imply the nonexistence of such wellorderings for certain cardinals. For example, a combination of {\cite[Theorem 1.7]{Lightface}} with {\cite[Lemma 5.5]{Lightface}} shows that there is no good $\Sigma_1(\kappa)$-wellordering of $\POT{\kappa}$ if $\kappa$ is a measurable cardinal with the property that there two different normal ultrafilters on $\kappa$ and {\cite[Theorem 1.2]{Lightface}} states that no wellordering of $\POT{\omega_1}$ is $\Sigma_1(\omega_1)$-definable if there is a measurable cardinal above a Woodin cardinal.

In this paper, we study the influence of measurable cardinals on the existence of good $\Sigma_1$-wellorderings. The following two lemmas provide examples of such implications. The results of this paper will show that it is possible that a measurable cardinal exists and good $\Sigma_1$-wellorders exists at all cardinals that are not ruled out by these two lemmas.

\begin{lemma}\label{lemma:MeasurableWOReals}
 If there is a measurable cardinal and $x\in\HHH{\omega_1}$, then no wellordering of $\POT{\omega}$ is $\Sigma_1(x)$-definable.  
\end{lemma}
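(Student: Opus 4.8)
The plan is to reduce the hypothesis to the classical fact that a measurable cardinal rules out $\mathbf{\Sigma}^1_2$-wellorderings of the reals. So assume, towards a contradiction, that $x\in\HHH{\omega_1}$ and that $\lhd$ is a wellordering of $\POT{\omega}$ whose graph is $\Sigma_1(x)$-definable, say via a $\Sigma_1$-formula $\varphi$, so that $\langle r,s\rangle\lhd\text{-pairs}$ are picked out by $\varphi(\cdot,x)$. Coding ordered pairs of reals as elements of $\HHH{\omega_1}$, the relation $\lhd$ becomes a $\Sigma_1(x)$-definable subset of $\HHH{\omega_1}$, and the first task is to see that it is in fact $\Sigma^1_2$ in a real coding $x$.

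The crucial observation for this reduction is that, since $\omega_1$ is an uncountable cardinal, $\HHH{\omega_1}$ is $\Sigma_1$-elementary in the universe, i.e. $\HHH{\omega_1}\prec_{\Sigma_1}V$. Concretely, if $V\models\varphi(\langle r,s\rangle,x)$ then a witness for the leading existential quantifier of $\varphi$ can be pulled down into $\HHH{\omega_1}$ by choosing a countable elementary submodel of some $V_\theta$ containing $r$, $s$, $x$ and a witness, and then applying the Mostowski collapse, which fixes the hereditarily countable parameters $r$, $s$ and $x$. Hence $\lhd$ is $\Sigma_1$-definable over $\langle\HHH{\omega_1},\in\rangle$ from the parameter $x$, and by L\'evy's characterization of $\Sigma^1_2$ as $\Sigma_1$-definability over $\HHH{\omega_1}$ this exhibits $\lhd$ as a $\Sigma^1_2(x)$-relation on $\POT{\omega}$.

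I would then invoke the Mansfield--Solovay theorem: a $\Sigma^1_2(x)$-wellordering of the reals can only exist if every real is constructible from $x$, i.e. $\POT{\omega}\subseteq\LLOf{x}$. Thus the assumed definable wellordering forces $\POT{\omega}\subseteq\LLOf{x}$. Finally, the measurable cardinal delivers the contradiction: its existence implies analytic determinacy, and hence that $x^\#$ exists for every $x\in\HHH{\omega_1}$; consequently $\omega_1^V$ is inaccessible in $\LLOf{x}$, so $\POT{\omega}\cap\LLOf{x}$ has size $\aleph_1^{\LLOf{x}}\leq\aleph_0$ in $V$ and is a \emph{proper} subset of $\POT{\omega}$. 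This contradicts $\POT{\omega}\subseteq\LLOf{x}$, so no such wellordering can exist.

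The sharp computation and the appeal to Mansfield--Solovay are entirely standard; the step that must be handled with care is the reduction itself, namely verifying that the existential witness in the $\Sigma_1(x)$-definition over $V$ can always be located hereditarily countably, so that the relation is genuinely $\Sigma^1_2$ and not merely $\Sigma_1$ over the full universe. Once the $\Sigma_1$-elementarity $\HHH{\omega_1}\prec_{\Sigma_1}V$ is in place this is routine, and the lemma follows.
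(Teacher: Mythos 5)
Your proof is correct, but after the common first step it diverges from the paper's argument. Both proofs begin with the same reduction: a $\Sigma_1(x)$-definable wellordering of $\POT{\omega}$ with $x\in\HHH{\omega_1}$ is, by L\'evy absoluteness ($\HHH{\omega_1}\prec_{\Sigma_1}\VV$, which you reprove via countable elementary submodels; the paper simply cites {\cite[Lemma 25.25]{MR1940513}}), a $\mathbf{\Sigma}^1_2$-wellordering of the reals. From there the paper takes the regularity-property route: a $\mathbf{\Sigma}^1_2$-wellordering yields a $\mathbf{\Sigma}^1_2$ set without the Baire property, contradicting Solovay's theorem that a measurable cardinal implies all $\mathbf{\Sigma}^1_2$ sets have the Baire property. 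You instead invoke Mansfield--Solovay to get $\POT{\omega}\subseteq\LLOf{z}$ for a real $z$ coding $x$ (a small point of hygiene: Mansfield's theorem is stated for real parameters, so you should pass from $x$ to such a $z$, with $\LLOf{x}\subseteq\LLOf{z}$; this is routine) and then refute this with sharps: the measurable gives $z^{\#}$, so $\omega_1^{\VV}$ is inaccessible in $\LLOf{z}$ and $\POT{\omega}\cap\LLOf{z}$ is countable in $\VV$ --- or, even more directly, $z^{\#}$ itself codes a real outside $\LLOf{z}$. Each approach has its merits: the paper's is a two-citation one-liner; yours makes visible exactly how much large-cardinal strength is used, since the only input is \anf{$z^{\#}$ exists for every real $z$}, so your argument proves the lemma already from analytic determinacy or a Ramsey cardinal. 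One stylistic simplification available to you: rather than routing through Martin's theorem (measurable implies analytic determinacy) and then Harrington's harder converse direction to recover sharps, you can cite Solovay's direct construction of $z^{\#}$ from a measurable cardinal, which shortens the chain of implications without changing the argument.
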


\begin{proof}
 Assume, towards a contradiction, that there is a $\Sigma_1(x)$-definable wellordering of $\POT{\omega}$ for some $x\in\HHH{\omega_1}$. By {\cite[Lemma 25.25]{MR1940513}}, this assumptions implies that there is a $\mathbf{\Sigma}^1_2$-wellordering of the reals and hence there is $\mathbf{\Sigma}^1_2$-subset of the reals without the Baire property. Classical results of Solovay (see {\cite[Corollary 14.3]{MR1994835}}) show that this conclusion implies that there are no measurable cardinals. 
\end{proof}

The following results show that the existence of a measurable cardinal also impose restrictions on the existence of good $\Sigma_1$-wellorderings at many cardinals above the measurable cardinal.  Remember that, given cardinals $\delta<\kappa$, we say that \emph{$\kappa$ is $\delta$-inaccessible} if $\lambda^\delta<\kappa$ holds for all $\lambda<\kappa$.

\begin{lemma}\label{lemma:MeasurableWOabove}
 Let $\delta$ be a measurable cardinal and let $\nu>\delta$ be a $\delta$-inaccessible cardinal with $\cof{\nu}\neq\delta$. If $\kappa$ is a cardinal with $\nu\leq\kappa\leq\nu^+$, then there is no good $\Sigma_1(\kappa)$-wellordering of $\POT{\kappa}$. 
\end{lemma}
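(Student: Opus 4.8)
The plan is to assume toward a contradiction that a good $\Sigma_1(\kappa)$-wellordering $\lhd$ of $\POT{\kappa}$ exists and to refute it using the ultrapower of the measurable cardinal. Fix a normal ultrafilter $U$ on $\delta$ and let $\map{j}{V}{M}$ be the associated ultrapower embedding, where $M=\Ult{V}{U}$; thus $\crit{j}=\delta$, $M^\delta\subseteq M$ and, crucially, $U\notin M$. I would first record three consequences of the hypotheses: that $\delta$-inaccessibility gives $2^\delta<\nu$ and $j(\delta)<\nu$ (since $\betrag{j(\delta)}\leq\delta^\delta=2^\delta<\nu$ and $\nu$ is a cardinal); that $\POT{\delta}^M=\POT{\delta}^V$ by the closure of $M$ under $\delta$-sequences; and, by the remark preceding the lemma, that $\lhd$ has order type $\kappa^+$.

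The heart of the argument is the computation $j(\nu)=\nu$, and this is where both hypotheses on $\nu$ enter: $\delta$-inaccessibility yields $\betrag{j(\gamma)}\leq\gamma^\delta<\nu$ for every $\gamma<\nu$, so that $j[\nu]\subseteq\nu$, while $\cof{\nu}\neq\delta$ yields continuity of $j$ at $\nu$, i.e. $j(\nu)=\sup j[\nu]$; together these give $j(\nu)=\nu$. Since $\nu^+$ and $\nu^{++}$ are then cardinals fixed by $j$, I obtain $j(\kappa)=\kappa$ and $j(\kappa^+)=\kappa^+$ for both admissible values $\kappa\in\{\nu,\nu^+\}$, so the two cases run in parallel. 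The point of establishing that $\kappa$ is a fixed point is that the $\Sigma_1$-formula $\varphi$ defining $I(\lhd)$ with parameter $\kappa$ then has \emph{the same} parameter in $M$: by elementarity $j(\lhd)$ is a good $\Sigma_1(\kappa)$-wellordering of $\POT{\kappa}^M$ whose proper initial segments are exactly $\Set{x}{M\models\varphi(x,\kappa)}$, and by $\Sigma_1$-upward absoluteness every such set is a genuine initial segment of $\lhd$. Hence $\POT{\kappa}^M$ is a $\lhd$-initial segment of $\POT{\kappa}^V$ of order type $j(\kappa^+)=\kappa^+=\otp{\lhd}$, which forces $\POT{\kappa}^M=\POT{\kappa}^V$.

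The contradiction is then a coding argument. Since $\delta$ is inaccessible in $V$, the ordinal $j(\delta)$ is inaccessible in $M$, so $M$ computes $(2^\delta)^M<j(\delta)<\nu\leq\kappa$, and there is a bijection $g\in M$ from $\POT{\delta}^M=\POT{\delta}^V$ onto some ordinal $\lambda<\kappa$. Then $g[U]\subseteq\lambda\subseteq\kappa$ lies in $\POT{\kappa}^V=\POT{\kappa}^M\subseteq M$, so $U=\Set{a}{g(a)\in g[U]}$ is definable in $M$ from $g$ and $g[U]$, giving $U\in M$ and contradicting $U\notin M$. I expect the main obstacle to be the middle paragraph: one must verify both that the parameter $\kappa$ is genuinely preserved (the fixed-point computation, where the two hypotheses on $\nu$ are used) and that the inclusion $I(j(\lhd))\subseteq I(\lhd)$ really upgrades to $\POT{\kappa}^M=\POT{\kappa}^V$ rather than exhibiting $\POT{\kappa}^M$ as a \emph{proper} initial segment; ruling out the latter is exactly where the order type $\kappa^+$ and the preservation of $\kappa^+$ as a cardinal in $M$ do their work.
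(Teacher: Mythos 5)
Your proposal is correct, and its core coincides with the paper's proof: the same ultrapower $\map{j}{\VV}{M=\Ult{\VV}{U}}$, the same fixed-point computation (the paper proves your continuity fact by hand, splitting into the cases $\cof{\nu}>\delta$ and $\cof{\nu}<\delta$, using normality where $\delta$-completeness would suffice, and then deduces $j(\kappa)=\kappa$), and the same use of $\Sigma_1$-upward absoluteness to see that every initial segment of $j(\lhd)$ computed in $M$ is a genuine initial segment of $\lhd$. You diverge in the endgame. The paper never invokes $\otp{\lhd}=\kappa^+$ or $j(\kappa^+)=\kappa^+$: it only notes that $2^\delta<\nu\leq\kappa$ yields some $x\subseteq\kappa$ with $x\notin M$ (implicitly the same coding of $U$ that you carry out explicitly), sets $\beta=\rank{x}{\lhd}$, uses elementarity to get $\otp{\POT{\kappa}^M,j(\lhd)}>j(\beta)\geq\beta$ and hence a $y\in\POT{\kappa}^M$ of $j(\lhd)$-rank $\beta$, and concludes from $j(\lhd)\subseteq{\lhd}$ that either $x=y$ or $x\lhd y$, so that $\lhd$-downward closedness of $\POT{\kappa}^M$ forces $x\in M$, a contradiction. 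Your version instead upgrades the absoluteness step to the full equality $\POT{\kappa}^M=\POT{\kappa}$ via the order-type computation and then reconstructs $U$ inside $M$. This costs two extra ingredients --- the $\Sigma_1$-reflection fact that $\otp{\lhd}=\kappa^+$ and the additional fixed point $j(\kappa^+)=\kappa^+$ --- but buys a stronger intermediate conclusion and a self-contained final contradiction; the paper's rank argument is leaner and sidesteps the order type entirely. One step you compress (and correctly flag as the main obstacle) is the transfer of the order type from $j(\lhd)$ to $\lhd$ on $\POT{\kappa}^M$: this needs $j(\lhd)={\lhd}\cap M$, which follows either from your inclusion $I(j(\lhd))^M\subseteq I(\lhd)$ because two linear orders on the same set, one contained in the other, must coincide, or by the paper's explicit device of comparing, for $y\in\POT{\kappa}^M$, the segments $A=\Set{a\in\POT{\kappa}^M}{a\mathbin{j(\lhd)}y}$ and $A\cup\{y\}$, both of which lie in $I(\lhd)$, so that $A=\Set{a\in\POT{\kappa}}{a\lhd y}$.
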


\begin{proof}
 Assume, towards a contradiction, that there is a good $\Sigma_1(\kappa)$-wellordering $\lhd$ of $\POT{\kappa}$. Pick a $\Sigma_1$-formula $\varphi(v_0,v_1)$ with $I(\lhd)=\Set{A}{\varphi(A,\kappa)}$.  Let $U$ be a normal ultrafilter on $\delta$ and let $\map{j_U}{\VV}{M=\Ult{\VV}{U}}$ denote the induced ultrapower map. 

 \begin{claim*}
  $j_U(\nu)=\nu$.
 \end{claim*}

 \begin{proof}[Proof of the Claim]
  First, assume that $\cof{\nu}>\delta$. Pick $\map{f}{\delta}{\nu}$. Then there is $\lambda<\nu$ with $\ran{f}\subseteq\lambda$. Since our assumptions imply that the set of all functions from $\delta$ to $\lambda$ has cardinality less than $\nu$ and all elements of the ordinal $j_U(f)(\delta)$ are of the form $j_U(g)(\delta)$ for some function $\map{g}{\delta}{\lambda}$, we can conclude that $j_U(f)(\delta)<\nu$. This argument shows that $j_U(\nu)=\nu$ holds in this case, because every element of $j_U(\nu)$ is of the form $j_U(f)(\delta)$ for some $\map{f}{\delta}{\nu}$. 

  Now, assume that $\cof{\nu}<\delta$ and fix a cofinal sequence $\seq{\nu_\xi<\nu}{\xi<\cof{\nu}}$ in $\nu$. Pick $\map{f}{\delta}{\nu}$. In this situation, the normality of $U$ yields $\xi_*<\cof{\nu}$ with $f^{{-}1}[\nu_{\xi_*}]\in U$. In particular, every element of the ordinal $j_U(f)(\delta)$ is of the form $j_U(g)(\delta)$ for some function $\map{g}{\delta}{\nu_{\xi_*}}$. Since our assumptions imply that the set of all functions from $\delta$ to $\nu_{\xi_*}$ has cardinality less than $\nu$, this shows that the ordinal $j_U(f)(\delta)$ has cardinality less than $\nu$. As above, we can conclude that $j_U(\nu)=\nu$. 
 \end{proof}

 \begin{claim*}
  $j_U(\kappa)=\kappa$.
 \end{claim*}

 \begin{proof}
   By the above claim, we may assume that $\kappa=\nu^+$. Then the above claim implies that $j_U(\nu)<\kappa$ and therefore $\kappa \geq (\nu^+)^M = j_U(\nu^+) = j_U(\kappa) \geq \kappa$.
 \end{proof}

 Define ${\blacktriangleleft} = j_U(\lhd)$.

 \begin{claim*}
  ${\blacktriangleleft}={\lhd\cap M}$ and $\POT{\kappa}^M$ is $\lhd$-downwards closed in $\POT{\kappa}$. 
 \end{claim*}

 \begin{proof}
 By the above claim and elementarity, we know that $$I({\blacktriangleleft})^M ~ = ~ \Set{A\in M}{\varphi(A,\kappa)^M}$$ and therefore $\Sigma_1$-upwards absoluteness implies that  $I({\blacktriangleleft})^M\subseteq I(\lhd)$. 
 
 Pick $x,y\in\POT{\kappa}^M$. First, assume that $x \blacktriangleleft y$. Then there is $A\in I({\blacktriangleleft})^M$ with $x\in A$ and $y\notin A$. By the above remarks, we have $A\in I(\lhd)$ and therefore $x\lhd y$. In the other direction, assume that $x\lhd y$ holds. Set $A=\Set{a\in\POT{\kappa}^M}{a \blacktriangleleft y}$ and $B=A\cup\{y\}$. Then $A,B\in I({\blacktriangleleft})^M\subseteq I(\lhd)$ and therefore $A=\Set{a\in\POT{\kappa}}{a\lhd y}$. In particular, $x\in A$ implies that $x\lhd y$. Moreover, this argument also shows that $\POT{\kappa}^M$ is $\lhd$-downwards closed in $\POT{\kappa}$. 
 \end{proof}

 Since $\nu>\delta$ is $\delta$-inaccessible, we have $2^\delta<\nu\leq\kappa$ and therefore there is a subset $x$ of $\kappa$ that is not contained in $M$. Set $\beta=\rank{x}{\lhd}$.  By elementarity, we have $$\otp{\POT{\kappa}^M,\blacktriangleleft} ~ > ~ j_U(\beta) ~ \geq ~ \beta$$ and we can find $y\in\POT{\kappa}^M$ with $\rank{y}{\blacktriangleleft}=\beta$. Since the above claim shows that ${\blacktriangleleft}\subseteq {\lhd}$, we have $\rank{y}{\lhd}\geq\beta$ and hence $x\neq y$ implies that $x\lhd y$. But the above claim also shows that $\POT{\kappa}^M$ is $\lhd$-downwards closed in $\POT{\kappa}$ and we can conclude that $x\in M$, a contradiction.  
\end{proof}

In models of the $\GCH$, the statement of the last lemma greatly simplifies.

\begin{corollary}\label{corollary:SimplificationGCHNonMeasurable}
 Assume that $\GCH$ holds above a measurable cardinal $\delta$ and $\kappa\geq\delta$ is a cardinal with the property that there is a good $\Sigma_1(\kappa)$-definable wellorder of $\POT{\kappa}$. 
 \begin{enumerate}
  \item There is a cardinal $\nu$ with $\cof{\nu}=\delta$ and $\nu\leq\kappa\leq\nu^+$. 

  \item There are no measurable cardinals in the interval $(\delta,\kappa]$. 
 \end{enumerate}
\end{corollary}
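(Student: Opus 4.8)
The plan is to reduce both assertions to the contrapositive of Lemma~\ref{lemma:MeasurableWOabove}, using that under $\GCH$ above $\delta$ the notion of $\delta$-inaccessibility becomes a condition on cofinalities. The first step I would carry out is the arithmetic observation that, assuming $\GCH$ above $\delta$, a cardinal $\nu>\delta$ is $\delta$-inaccessible if and only if either $\nu$ is a limit cardinal or $\nu=\mu^+$ for some cardinal $\mu>\delta$ with $\cof{\mu}>\delta$. Indeed, if $\nu>\delta$ is a limit cardinal, then every $\lambda<\nu$ satisfies $\lambda^\delta\leq\lambda^+<\nu$, where for $\lambda\leq\delta$ one uses $\lambda^\delta=2^\delta=\delta^+<\nu$; and for $\nu=\mu^+>\delta$ the cardinal $\nu$ is $\delta$-inaccessible if and only if $\mu^\delta<\mu^+$, which under $\GCH$ happens precisely when $\mu>\delta$ and $\cof{\mu}>\delta$. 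Equivalently, the cardinals above $\delta$ that fail to be $\delta$-inaccessible are exactly the successors $\mu^+$ with $\mu\geq\delta$ and $\cof{\mu}\leq\delta$; in particular every limit cardinal above $\delta$ is $\delta$-inaccessible.

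To prove the first statement, I would assume that a good $\Sigma_1(\kappa)$-wellordering of $\POT{\kappa}$ exists and argue by cases. If $\kappa=\delta$, then $\nu=\delta$ works, since $\cof{\delta}=\delta$. If $\kappa>\delta$ is $\delta$-inaccessible, then $\cof{\kappa}=\delta$ must hold, since otherwise Lemma~\ref{lemma:MeasurableWOabove} applied to $\kappa$ (note $\kappa\leq\kappa\leq\kappa^+$) would contradict the existence of the wellordering; hence $\nu=\kappa$ works. If $\kappa>\delta$ is not $\delta$-inaccessible, then by the dichotomy above $\kappa=\mu^+$ with $\mu\geq\delta$ and $\cof{\mu}\leq\delta$. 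When $\cof{\mu}=\delta$, the cardinal $\nu=\mu$ witnesses the claim. When $\cof{\mu}<\delta$, I would derive a contradiction: then $\mu>\delta$, and since a successor cardinal above $\delta$ is regular with cofinality above $\delta$, the cardinal $\mu$ must be a limit cardinal, hence $\delta$-inaccessible with $\cof{\mu}\neq\delta$; as $\kappa=\mu^+$ lies in the interval $[\mu,\mu^+]$, Lemma~\ref{lemma:MeasurableWOabove} applied to $\mu$ again contradicts the existence of the wellordering. This exhausts all cases.

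For the second statement, I would suppose towards a contradiction that $\theta$ is a measurable cardinal with $\delta<\theta\leq\kappa$, and let $\nu$ be the cardinal provided by the first statement, so that $\cof{\nu}=\delta$ and $\nu\leq\kappa\leq\nu^+$. Since $\theta$ is inaccessible, it is a regular limit cardinal with $\cof{\theta}=\theta>\delta=\cof{\nu}$; as $\theta\leq\kappa\leq\nu^+$ and a limit cardinal cannot equal the successor $\nu^+$, this forces $\theta\leq\nu$, and then $\cof{\theta}\neq\cof{\nu}$ gives $\theta<\nu$. Now $\cof{\nu}=\delta<\theta<\nu$ shows that $\nu$ is singular, hence a limit cardinal, so $\nu$ is $\theta$-inaccessible with $\cof{\nu}\neq\theta$, using that $\GCH$ holds above $\theta$. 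Applying Lemma~\ref{lemma:MeasurableWOabove} with the measurable cardinal $\theta$, the $\theta$-inaccessible cardinal $\nu$, and $\kappa\in[\nu,\nu^+]$ then contradicts the existence of the good $\Sigma_1(\kappa)$-wellordering. The step I expect to require the most care is the successor case of the first statement: applying Lemma~\ref{lemma:MeasurableWOabove} directly to $\kappa=\mu^+$ fails because such $\kappa$ need not be $\delta$-inaccessible, and the key point is to apply it instead to the predecessor $\mu$. The same idea resolves the second statement, where $\kappa$ need not lie in $[\theta,\theta^+]$ and one instead relocates the measurable cardinal to $\theta$ and applies the lemma to $\nu$.
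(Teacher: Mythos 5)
Your proof is correct. Part (i) is essentially the paper's argument: the paper likewise disposes of $\kappa\leq\delta^+$ via $\nu=\delta$, handles limit $\kappa$ by noting GCH makes $\kappa$ $\delta$-inaccessible so that Lemma \ref{lemma:MeasurableWOabove} forces $\cof{\kappa}=\delta$, and handles $\kappa=\mu^+$ by applying the lemma first to $\kappa$ itself (to rule out $\mu$ regular) and then to the singular, hence $\delta$-inaccessible, predecessor $\mu$ to get $\cof{\mu}=\delta$; your explicit GCH characterization of $\delta$-inaccessibility merely repackages those same two applications of the lemma.

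Part (ii) is where you genuinely diverge. The paper applies part (i) twice, once for $\delta$ and once for the putative measurable $\varepsilon\in(\delta,\kappa]$ (legitimate since GCH also holds above $\varepsilon$ and $\kappa\geq\varepsilon$), obtaining cardinals $\mu,\nu$ with $\cof{\mu}=\delta$, $\cof{\nu}=\varepsilon$, $\mu\leq\kappa\leq\mu^+$ and $\nu\leq\kappa\leq\nu^+$, and then asserts $\mu=\nu$, whence $\delta=\varepsilon$, a contradiction. You instead invoke (i) only at $\delta$ and return directly to Lemma \ref{lemma:MeasurableWOabove}, relocating the measurable to $\theta$ and showing that $\nu$ itself is $\theta$-inaccessible with $\cof{\nu}=\delta\neq\theta$. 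Both routes bottom out in the same lemma and are comparable in length, but yours makes fully explicit the successor-versus-limit bookkeeping that the paper's step \anf{$\mu=\nu$} leaves to the reader: to verify $\mu=\nu$ there, one must rule out, say, $\mu<\nu$, which forces $\nu=\mu^+=\kappa$, hence $\nu$ regular and $\varepsilon=\cof{\nu}=\nu$ a successor cardinal, contradicting the fact that a measurable cardinal is a limit cardinal --- exactly the kind of observation ($\theta\neq\nu^+$ because measurables are limit cardinals) that you carry out in detail. The paper's symmetric double application of (i) is slicker to state; your version is more self-contained at the cost of redoing a little inaccessibility arithmetic at $\theta$.
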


\begin{proof}
 (i) We may assume that $\kappa>\delta^+$. If $\kappa$ is a limit cardinal, then our $\GCH$-assumption implies that $\kappa$ is $\delta$-inaccessible and therefore Lemma \ref{lemma:MeasurableWOabove} shows that $\cof{\kappa}=\delta$. Now, assume that $\kappa=\nu^+$ with $\nu>\delta$. Then $\nu$ is singular, because otherwise $\kappa$ would be $\delta$-inaccessible and Lemma \ref{lemma:MeasurableWOabove} would imply that there are no good $\Sigma_1(\kappa)$-wellorderings of $\POT{\kappa}$. Since the $\GCH$ holds above $\delta$, this implies that $\nu$ is $\delta$-inaccessible and Lemma  \ref{lemma:MeasurableWOabove} implies that $\cof{\nu}=\delta$. 

 (ii) Assume, towards a contradiction, that there is a measurable cardinal $\varepsilon$ in the interval $(\delta,\kappa]$. By applying the first part of the corollary with both $\delta$ and $\varepsilon$, we find cardinals $\mu$ and $\nu$ with $\cof{\mu}=\delta$, $\cof{\nu}=\varepsilon$, $\mu\leq\kappa\leq\mu^+$ and $\nu\leq\kappa\leq\nu^+$. But then $\mu=\nu$ and hence  $\delta=\varepsilon$, a contradiction. 
\end{proof}

In order to show that the above lemmas can consistently state all limitations that a measurable cardinal imposes on the existence of good $\Sigma_1$-wellorders, we will use classical results of Kunen and Silver to prove the following result in Section \ref{section:SingleMeasurable}.

\begin{theorem}\label{theorem:Main}
 Assume that $\delta$ is a measurable cardinal and $U$ is a normal ultrafilter on $\delta$ such that $\VV=\LL[U]$ holds. Given an infinite cardinal $\kappa$, there is a good $\Sigma_1(\kappa)$-wellordering of $\POT{\kappa}$ if and only if either $\omega<\kappa<\delta$ or $\nu\leq\kappa\leq\nu^+$ for some cardinal $\nu$ with $\cof{\nu}=\delta$. 
\end{theorem}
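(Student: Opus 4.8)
The plan is to prove the two implications separately: the forward direction will follow by combining the lemmas already established with the two classical structural facts about the minimal model, while the reverse direction requires constructing the wellorderings explicitly from the canonical wellordering of $\LL[U]$.

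For the \emph{only if} direction I would begin by recording that, by Silver's theorem, the $\GCH$ holds in $\LL[U]$, and, by Kunen's analysis, $\delta$ is the unique measurable cardinal of $\LL[U]$. Assuming a good $\Sigma_1(\kappa)$-wellordering of $\POT{\kappa}$ exists, I would then argue by cases on $\kappa$. The value $\kappa=\omega$ is impossible by Lemma~\ref{lemma:MeasurableWOReals} and is correctly absent from the list; if $\omega<\kappa<\delta$ there is nothing to check, as the first disjunct already holds. Finally, when $\kappa\geq\delta$, the $\GCH$ above $\delta$ lets me apply part~(i) of Corollary~\ref{corollary:SimplificationGCHNonMeasurable}, which produces a cardinal $\nu$ with $\cof{\nu}=\delta$ and $\nu\leq\kappa\leq\nu^+$; taking $\nu=\delta$ also subsumes the cases $\kappa\in\{\delta,\delta^+\}$.

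For the \emph{if} direction I would, for each admissible $\kappa$, manufacture a good $\Sigma_1(\kappa)$-wellordering of $\POT{\kappa}$ from the canonical wellordering $<_{\LL[U]}$. The template is G\"odel's argument in $\LL$: I want to express that a set $A$ is an initial segment of $<_{\LL[U]}\restriction\POT{\kappa}$ by asserting the existence of a small transitive \anf{approximating structure} $N$ that contains the relevant subsets of $\kappa$ and computes the wellordering correctly, turning membership in $I(\lhd)$ into a $\Sigma_1$-statement with parameter $\kappa$. In $\LL[U]$ the correct approximating structures are not models of $\VV=\LL$ but iterable premice of the form $\LL[\mu]$ for a normal measure $\mu$; here I would invoke Kunen's comparison and iterability theory to identify the iterable such premice with the initial segments of the iterated ultrapowers $M_\alpha=\LL[U_\alpha]$ of $\LL[U]$, and to render iterability itself $\Sigma_1$ (for the linear iterations attached to a single measure it suffices to exhibit one sufficiently long wellfounded iterate). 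That every subset of $\kappa$ is captured by such a premouse rests on Silver's $\GCH$ together with condensation for $\LL[U]$.

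The crux, and the point at which the stated dichotomy is forced, is the elimination of the parameter $U$: the approximating premice must be singled out using $\kappa$ alone. For $\kappa\geq\delta$ this is feasible because the measure is coded by subsets of $\delta\leq\kappa$, so a premouse carrying its own critical point below $\kappa$ can be recognized and compared against $\LL[U]$; the hypothesis $\cof{\nu}=\delta$ for $\nu\leq\kappa\leq\nu^+$ is exactly what anchors this critical point, in parallel with the rigidity computation of Lemma~\ref{lemma:MeasurableWOabove}, where $j_U$ fixes $\nu$ precisely when $\cof{\nu}\neq\delta$ and moves it otherwise. For $\omega<\kappa<\delta$ the measure lies strictly above $\kappa$ and cannot be read off from subsets of $\kappa$; although $\POT{\kappa}^{\LL[U]}$ properly extends $\POT{\kappa}^{\LL}$ (it already contains reals such as $0^\sharp$), I would show that each of its elements sits in an iterable premouse of size $\kappa$ that is recognizable from the uncountable parameter $\kappa$, the uncountability being used essentially. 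I expect the main obstacle to be proving that this recognition of iterable premice is both upward and downward absolute, so that the defining relation is genuinely $\Sigma_1(\kappa)$; the same step is provably unavailable for $\kappa=\omega$, where it would yield a $\boldsymbol{\Sigma}^1_2$-wellordering of the reals and thereby contradict Lemma~\ref{lemma:MeasurableWOReals}.
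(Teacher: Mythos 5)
Your architecture is essentially the paper's. The forward direction is verbatim theirs: Lemma~\ref{lemma:MeasurableWOReals} excludes $\kappa=\omega$, Silver's $\GCH$ in $\LL[U]$ activates Corollary~\ref{corollary:SimplificationGCHNonMeasurable}(i), and $\nu=\delta$ absorbs $\kappa\in\{\delta,\delta^+\}$. The reverse direction also matches in both prongs. For $\nu\leq\kappa\leq\nu^+$ with $\cof{\nu}=\delta$ the paper proves (Lemmas~\ref{lemma:GoodWOSingular} and~\ref{lemma:GoodWOSuccSingular}) that $\{U\}$ is $\Sigma_1(\nu)$- respectively $\Sigma_1(\mu^+)$-definable by exactly the recognition-plus-Kunen-comparison scheme you describe, and the cofinality hypothesis is used just as you predict: if a candidate $\langle\LL[F],\in,F\rangle$ with measurable $\varepsilon>\delta$ satisfied the defining formula, it would be an iterate of $\langle\LL[U],\in,U\rangle$, and a continuous cofinal $\map{c}{\delta}{\nu}$ fixed pointwise by the iteration map would force $\cof{\nu}^{\LL[F]}\leq\delta<\varepsilon$, contradicting what the formula demands of the candidate. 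Then the canonical wellordering of $\LL[U]$, good $\Sigma_1(\kappa,U)$, becomes good $\Sigma_1(\kappa)$. For $\omega<\kappa<\delta$, your \anf{iterable premice of size $\kappa$ recognizable from $\kappa$} are precisely the mice of the Dodd--Jensen core model: the paper shows $\POT{\delta}\subseteq\KK^{DJ}=\KK$ and the absoluteness you single out as the main obstacle is supplied by Koepke's theorem that mouse-hood is absolute to transitive models of a finite fragment of $\ZFC$ containing $\kappa$ and $M$, with the uncountability of $\kappa$ used exactly where you anticipate.

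One concrete slip in your case split: you assign $\kappa=\delta$ to the \anf{recognize the measure from $\kappa$} prong, but there the cofinality anchor is vacuous. With $\nu=\kappa=\delta$ regular, no clause of the form \anf{$\cof{\nu}^M=\varepsilon<\nu$} can constrain a candidate as in Lemma~\ref{lemma:GoodWOSingular} (which requires $\nu>\delta$), and the paper never claims, nor is it clear, that $\{U\}$ is $\Sigma_1(\delta)$-definable. The paper instead covers all uncountable $\kappa\leq\delta$ uniformly via $\POT{\delta}\subseteq\KK^{DJ}$ and the good $\Sigma_1(\kappa)$-wellordering of the core model (Corollary~\ref{corollary:GoodWObelowMeasurable}), i.e.\ by your small-$\kappa$ mechanism, never defining $U$ from $\delta$ at all; note that $\kappa=\delta^+$ is unproblematic since Lemma~\ref{lemma:GoodWOSuccSingular} allows $\mu=\delta$. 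The repair is routine (your premouse mechanism extends to $\kappa=\delta$), but as written your plan leaves $\kappa=\delta$ unproved. A second, smaller omission: the successor case hides real work that your sketch does not anticipate, namely the induction along an iteration of length $\kappa=\mu^+$ showing $\cof{j_{1,\gamma}(\mu)}^{N_\gamma}=\delta$ and $\kappa=(j_{1,\gamma}(\mu)^+)^{N_\gamma}=j_{1,\gamma}(\kappa)$ for all $0<\gamma<\kappa$, which is what lets the paper identify $\nu$ with $j_{1,\gamma}(\mu)$ and derive $\delta=\varepsilon$; this fits within your plan but is the technical heart of that lemma.
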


The above result directly shows that the desired characterization of the class of all cardinals $\kappa$ with the property that there is a good $\Sigma_1(\kappa)$-wellordering of $\POT{\kappa}$ holds true in models of the form $\LL[U]$.

\begin{corollary}\label{corollary:CharacterizeAllGoodWOinLU}
 In the setting of Theorem \ref{theorem:Main}, the following statements are equivalent for every infinite cardinal $\kappa$: 
 \begin{enumerate}
    \item Either $\kappa$ is countable or there is a $\delta$-inaccessible cardinal $\nu$ with $\cof{\nu}\neq\delta$ and $\nu\leq\kappa\leq\nu^+$. 

  \item There is no good $\Sigma_1(\kappa)$-wellordering of $\POT{\kappa}$. 
 \end{enumerate}
\end{corollary}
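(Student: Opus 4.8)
The plan is to read the equivalence off Theorem \ref{theorem:Main}, using that $\GCH$ holds throughout $\LL[U]$. By Theorem \ref{theorem:Main}, statement (ii) --- that there is no good $\Sigma_1(\kappa)$-wellordering of $\POT{\kappa}$ --- is equivalent to the simultaneous failure of the two conditions $\omega<\kappa<\delta$ and \emph{$\nu\leq\kappa\leq\nu^+$ for some cardinal $\nu$ with $\cof{\nu}=\delta$}. So the whole corollary reduces to a comparison, under $\GCH$, of this negated condition with statement (i), and I would organize it as a short case distinction according to the position of $\kappa$ relative to $\delta$ and to whether $\kappa$ is a limit or a successor cardinal.

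Before the cases I would record two consequences of $\GCH$ above $\delta$. First, every limit cardinal $\kappa>\delta$ is $\delta$-inaccessible: for $\delta\leq\lambda<\kappa$ we have $\lambda^\delta\leq\lambda^+<\kappa$ since $\kappa$ is a limit cardinal, while for $\lambda<\delta$ we have $\lambda^\delta\leq 2^\delta=\delta^+<\kappa$, again because $\delta<\kappa$ and $\kappa$ is a limit cardinal. Second, for every $\rho\geq\delta$ one has $\rho^\delta=\rho$ exactly when $\cof{\rho}>\delta$ and $\rho^\delta=\rho^+$ otherwise, so a successor cardinal $\rho^+>\delta$ is $\delta$-inaccessible if and only if $\cof{\rho}>\delta$. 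Finally I would note the elementary fact that the cardinals $\nu$ with $\nu\leq\kappa\leq\nu^+$ are exactly $\nu=\kappa$ when $\kappa$ is a limit cardinal, and $\nu\in\{\mu,\kappa\}$ when $\kappa=\mu^+$.

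The range $\kappa\leq\delta$ is immediate: no $\delta$-inaccessible cardinal satisfies $\nu\leq\kappa$ (such cardinals exceed $\delta$), so (i) reduces to the assertion that $\kappa$ is countable, and by Theorem \ref{theorem:Main} this is precisely the range in which no good wellordering exists. For $\kappa>\delta$ I would argue with the candidate description. Since any successor cardinal above $\delta$ has cofinality different from $\delta$, the failure of (ii), i.e.\ the existence of a candidate of cofinality $\delta$, happens exactly when $\cof{\kappa}=\delta$ in the limit case and exactly when $\cof{\mu}=\delta$ in the successor case $\kappa=\mu^+$; equivalently, (ii) holds iff $\cof{\kappa}\neq\delta$ in the limit case and iff $\cof{\mu}\neq\delta$ in the successor case. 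Statement (i) instead asks for a candidate that is $\delta$-inaccessible of cofinality $\neq\delta$. In the limit case the single candidate $\kappa$ is $\delta$-inaccessible by the first structural fact, so (i) is equivalent to $\cof{\kappa}\neq\delta$ and hence to (ii), settling this case.

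The remaining work is the bookkeeping for a successor cardinal $\kappa=\mu^+>\delta$, where I must show that (i) is equivalent to $\cof{\mu}\neq\delta$ and where both candidates $\mu$ and $\mu^+$ enter. If $\cof{\mu}>\delta$, then $\mu^+$ is $\delta$-inaccessible by the second structural fact and has cofinality $\mu^+\neq\delta$, so it witnesses (i). If $\cof{\mu}<\delta$, then $\mu$ is a singular, hence limit, cardinal with $\mu>\delta$ (otherwise $\mu=\delta$ would force $\cof{\mu}=\delta$), so it is $\delta$-inaccessible by the first structural fact and of cofinality $\neq\delta$, again witnessing (i). Finally, if $\cof{\mu}=\delta$, then neither $\mu$ nor $\mu^+$ is a $\delta$-inaccessible candidate of cofinality $\neq\delta$, so (i) fails. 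Hence (i) holds if and only if $\cof{\mu}\neq\delta$, which by the previous paragraph is exactly (ii); together with the limit case and the range $\kappa\leq\delta$ this gives the equivalence, and all remaining steps are routine $\GCH$-arithmetic.
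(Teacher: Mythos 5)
Your proof is correct --- the $\GCH$ arithmetic (every limit cardinal above $\delta$ is $\delta$-inaccessible; a successor $\mu^+>\delta$ is $\delta$-inaccessible if and only if $\cof{\mu}>\delta$), the enumeration of the candidates $\nu$ with $\nu\leq\kappa\leq\nu^+$, and the boundary cases $\kappa=\delta$ and $\mu=\delta$ all check out --- but it is organized differently from the paper's argument. The paper does not derive the implication from (i) to (ii) from Theorem \ref{theorem:Main} at all: it cites Lemma \ref{lemma:MeasurableWOReals} and Lemma \ref{lemma:MeasurableWOabove} directly, so that this direction holds outright from the existence of a measurable cardinal rather than merely in $\LL[U]$; it then proves the converse by contradiction, assuming that (i) fails and (ii) holds and using Theorem \ref{theorem:Main} together with the $\GCH$ to rule out in turn $\kappa\leq\delta^+$, $\kappa$ a limit cardinal, and $\kappa=\nu^+$ with $\nu$ regular, until a singular $\nu>\delta$ with $\cof{\nu}\neq\delta$ emerges and witnesses (i), a contradiction. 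You instead read both implications off the biconditional of Theorem \ref{theorem:Main}, which is perfectly legitimate since that theorem is already established, but it obliges you to verify the exact complementarity, under the $\GCH$, of condition (i) and the disjunction in Theorem \ref{theorem:Main}; this is why you need the full equivalence that $\mu^+$ is $\delta$-inaccessible if and only if $\cof{\mu}>\delta$ in the sub-case $\cof{\mu}>\delta$ (covering also singular $\mu$ of cofinality above $\delta$, where alternatively $\nu=\mu$ itself would serve as a witness), whereas the paper gets by with the special case that successors of regular cardinals above $\delta$ are $\delta$-inaccessible. The trade-off: your version is a self-contained, exhaustive equivalence computation at the cost of somewhat more bookkeeping, while the paper's version is shorter and keeps the nonexistence direction attributed to the two lemmas, which is the more informative statement of that implication.
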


\begin{proof}
The implication from (i) to (ii) is proved in Lemma \ref{lemma:MeasurableWOReals} and Lemma \ref{lemma:MeasurableWOabove}. 
We now assume that (i) fails and (ii) holds. Then Theorem \ref{theorem:Main} implies that $\kappa>\delta^+$. Since the $\GCH$ holds in $\LL[U]$ (see {\cite[Theorem 20.3]{MR1994835}}) and every limit cardinal greater than $\delta$ is $\delta$-inaccessible, we know that $\kappa$ is not a limit cardinal, because otherwise Theorem \ref{theorem:Main} would imply that  $\cof{\kappa}\neq\delta$ and (i) would hold. Hence $\kappa=\nu^+$ with $\nu>\delta$ and $\nu$ is singular, because the $\GCH$ implies that successors of regular cardinals above $\delta$ are $\delta$-inaccessible. In this situation, Theorem \ref{theorem:Main} implies that $\cof{\nu}\neq\delta$ and hence (i) holds, a contradiction. 
\end{proof}

Motivated by the above results, we will prove the following result in Section \ref{section:Core}. It  uses the \emph{Dodd-Jensen core model $\KK^{DJ}$} (see \cite{MR611394}) to show that measurability can be considered the smallest large cardinal property that implies the non-existence of good $\Sigma_1$-wellorders at certain uncountable cardinals.

\begin{lemma}\label{lemma:WOinKDJ}
 Assume that $\VV=\KK^{DJ}$ holds. If $\kappa$ is an uncountable cardinal, then there is a good $\Sigma_1(\kappa)$-wellordering of $\POT{\kappa}$.  
\end{lemma}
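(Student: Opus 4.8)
The plan is to adapt G\"odel's classical construction of good $\Sigma_1(\kappa)$-wellorderings in $\LL$ to the core model, using that $\KK^{DJ}$ is a fine-structural model sharing the relevant features of $\LL$. Concretely, I would work with the canonical fine-structural hierarchy $\langle J_\alpha \mid \alpha\in\On\rangle$ of $\KK^{DJ}$ and the induced canonical wellordering $<_K$ of the universe, and set ${\lhd}={<_K}\restriction\POT{\kappa}$. At the outset I would isolate two structural facts: (1) under $\VV=\KK^{DJ}$, both the hierarchy (the relation ``$y=J_\alpha$'') and the wellordering $<_K$ are \emph{lightface} $\Sigma_1$-definable, with no normal measure or mouse appearing as a parameter; and (2) the hierarchy is acceptable and $\KK^{DJ}$ satisfies the $\GCH$. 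The first is where the special nature of the Dodd--Jensen core model enters, and the second is needed to control the location of subsets of $\kappa$.

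From acceptability and the $\GCH$ I would extract the two quantitative facts that drive the $\LL$-style argument: every $X\subseteq\kappa$ lies in some $J_\alpha$ with $\alpha<\kappa^+$, and for every $\alpha\in[\kappa,\kappa^+)$ the level $J_\alpha$ has cardinality $\kappa$ and satisfies ``$\kappa$ is the largest cardinal''. Since $<_K$ respects the order of first appearance in the hierarchy, each $J_\alpha$ is closed under $<_K$-predecessors, so for $Y\in J_\alpha$ with $Y\subseteq\kappa$ the set $\Set{X\in\POT{\kappa}}{X <_K Y}$ is already an element of, and is correctly computed by, $J_\alpha$.

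With these in hand I would define the predicate for $I(\lhd)$ as follows: $A\in I(\lhd)$ if and only if there exist a transitive set $N$ and a $Y\in N$ such that $N$ is a level of the $\KK^{DJ}$-hierarchy, $\kappa\in N$ is the largest cardinal of $N$, $Y\subseteq\kappa$, and $A=\Set{X\in N}{X\subseteq\kappa\ \wedge\ (X<_K Y)^N}$. The outer quantifier $\exists N\,\exists Y$ is the $\Sigma_1$ existential, and by fact (1) the clause ``$N$ is a level of $\KK^{DJ}$'' together with the remaining requirements is expressible by a formula that is $\Sigma_0$ in $N,Y,A,\kappa$; hence the whole predicate is $\Sigma_1(\kappa)$. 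Completeness (every true initial segment is captured) follows from the location fact by taking $N=J_\alpha$ for a suitable $\alpha<\kappa^+$ with $Y\in J_\alpha$, while soundness (every witness yields a genuine initial segment) follows because a $\Sigma_1$-recognized $N$ is an actual level of $\KK^{DJ}$, so $<_K^N={<_K}\restriction N$ by absoluteness and the predecessor set is downward closed by the closure property above.

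The main obstacle is fact (1): establishing that, under $\VV=\KK^{DJ}$, the core-model hierarchy and its wellordering admit a lightface $\Sigma_1$ definition, so that ``$N$ is a level of $\KK^{DJ}$'' is genuinely $\Sigma_1$ and, crucially, needs no non-definable parameter such as a normal measure. This is exactly the point at which the argument uses that $\KK^{DJ}$ is the canonical core model rather than an arbitrary fine-structural model such as $\LL[U]$: the analogous construction fails in $\LL[U]$ above the measurable precisely because defining the $\LL[U]$-hierarchy there requires the measure $U$, which is not $\Sigma_1(\kappa)$-definable. I would therefore appeal to the Dodd--Jensen analysis (the comparison and uniqueness theory of mice, \cite{MR611394}) to obtain both the definability of the hierarchy and the associated condensation, i.e.\ that transitive models satisfying the displayed internal requirements really are initial segments of $\KK^{DJ}$. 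Once this is secured, the remaining verifications are routine and parallel to the classical case of $\LL$.
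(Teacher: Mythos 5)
Your overall strategy---wellorder $\POT{\kappa}$ by the canonical order of the core model and recognize initial segments inside $\Sigma_1$-certified local structures---is the right one, but there is a genuine gap at exactly the point you flag, and it is not repaired by what you propose. The failing step is the claim that \anf{\emph{$N$ is a level of the $\KK^{DJ}$-hierarchy}}, together with the side conditions, is $\Sigma_0$ (or even first-order over $N$) in $N,Y,A,\kappa$. For $\LL$ this is fine by G\"odel condensation: there is a single sentence whose transitive models are exactly the levels of the hierarchy, so recognition is internal. For $\KK^{DJ}$ no such internal characterization exists, because genuine levels are assembled from lower parts of \emph{iterable} premice, and iterability is not a first-order property of the candidate structure $N$: a transitive $N$ can satisfy any proposed internal theory while being built over non-iterable pseudo-premice, in which case soundness of your predicate fails---such an $N$ may contain subsets of $\kappa$ that do not belong to $\KK^{DJ}$, or may order them in a way incompatible with $<_{\KK}$, since without iterability no comparison argument aligns its internal order with the mouse order. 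The Dodd--Jensen comparison and uniqueness theory you cite yields condensation for hulls of \emph{actual} levels and mice; it does not upgrade satisfaction of a sentence by an arbitrary transitive set to being a level. Relatedly, your fact (1) as stated (lightface $\Sigma_1$-definability) overshoots: even the correct definition needs the uncountable parameter $\kappa$, precisely so that certifying structures contain $\omega_1$ and hence compute iterability correctly.

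The correct repair replaces your $\Sigma_0$ clause by a further $\Sigma_1$ existential over a certifying model: by {\cite[Theorem 2.7]{MR926749}} there is a finite fragment ${\sf F}$ of $\ZFC$ such that \anf{\emph{$M$ is a mouse}} is absolute between $\VV$ and transitive models of ${\sf F}$ containing $\kappa$ and $M$, so \anf{\emph{$M$ is a mouse}} becomes $\Sigma_1(\kappa)$. This is how the paper proceeds: if there are no mice, then $\KK^{DJ}=\LL$ and your argument is literally G\"odel's; if there is a mouse, then $\KK^{DJ}=\KK$, and (as in the proof of Lemma \ref{lemma:GoodWOinK}) one declares $A\in I(\lhd)$ if and only if there is a mouse $M$ with $\kappa,A\in\lp{M}$ such that $A$ is an initial segment of the restriction of $<_{\LL[F_M]}$ to $\POT{\kappa}$ as computed in $M$. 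Soundness then comes from the coherence of the orders $<_{\LL[F_M]}$ across mice and the $\lhd$-downward closure of $\lp{M}$, and completeness from $\VV=\KK$. With that substitution the rest of your verifications go through; without it, the predicate you wrote is not $\Sigma_1$, and if one simply drops the iterability requirement it becomes $\Sigma_1$ but false.
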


Finally, we consider the influence of the existence of two measurable cardinals on the existence of good $\Sigma_1$-wellorders at uncountable cardinals. Section \ref{section:TwoMeasurables} contains the proof of the following result that directly generalizes Theorem \ref{theorem:Main} to this setting.

\begin{theorem}\label{theorem:TwoMeasurables}
 Let $\delta_0<\delta_1$ be measurable cardinals, let $U_0$ be a normal ultrafilter on $\delta_0$ and let $U_1$ be a normal ultrafilter on $\delta_1$. Assume that $\VV=\LL[U_0,U_1]$ holds. Given an infinite cardinal $\kappa$, there is a good $\Sigma_1(\kappa)$-wellordering of $\POT{\kappa}$ if and only if either $\omega<\kappa<\delta_0$ or $\nu\leq\kappa\leq\nu^+$ for some cardinal $\nu<\delta_1$ with $\cof{\nu}=\delta_0$.  
\end{theorem}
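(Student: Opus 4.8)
The plan is to reduce both directions to the single-measurable Theorem~\ref{theorem:Main}, applied inside the inner model $\LL[U_0]$, supplemented by the two obstruction results. The key preliminary is that the upper measure $U_1$, whose critical point $\delta_1$ lies strictly above every cardinal $\kappa$ named in the statement, leaves the relevant part of the universe untouched. Passing to the ultrapower $\Ult{\LL[U_0,U_1]}{U_1}$ fixes $V_{\delta_1}$, and in this ultrapower the image of $U_1$ is a measure sitting above $\delta_1$, which therefore does not affect the relativized constructibility hierarchy below $\delta_1$; one concludes that $V_{\delta_1}^{\LL[U_0,U_1]}=V_{\delta_1}^{\LL[U_0]}$. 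Consequently $\POT{\kappa}$ is computed identically in $\LL[U_0]$ and $\LL[U_0,U_1]$ for every $\kappa<\delta_1$, and the two models agree on all cardinals and cofinalities below $\delta_1$. I would also record that $\LL[U_0]$ satisfies $\VV=\LL[U]$ for the normal ultrafilter $U_0\cap\LL[U_0]$ on $\delta_0$, so that Theorem~\ref{theorem:Main} is available inside it, and that the $\GCH$ holds in $\LL[U_0,U_1]$.

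For the direction stating non-existence, the cardinal $\kappa=\omega$ is immediately excluded by Lemma~\ref{lemma:MeasurableWOReals}, since a measurable cardinal exists. For $\kappa\geq\delta_0$ I would invoke Corollary~\ref{corollary:SimplificationGCHNonMeasurable} with the measurable cardinal $\delta_0$: if $\POT{\kappa}$ carried a good $\Sigma_1(\kappa)$-wellordering, then there would be a cardinal $\nu$ with $\cof{\nu}=\delta_0$ and $\nu\leq\kappa\leq\nu^+$, and there would be no measurable cardinals in the interval $(\delta_0,\kappa]$. As $\delta_1$ is measurable and above $\delta_0$, this last conclusion forces $\kappa<\delta_1$, hence $\nu<\delta_1$; this is precisely the admissible form. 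Thus every cardinal carrying such a wellordering is of the form required by the theorem.

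For the existence direction, fix $\kappa$ of the admissible form. In every case $\kappa<\delta_1$, because $\delta_1$ is an inaccessible limit cardinal and $\nu<\delta_1$ forces $\nu^+<\delta_1$. By the agreement of $V_{\delta_1}$ noted above, the admissibility condition transfers verbatim to $\LL[U_0]$, the cofinality of $\nu$ and the successor $\nu^+$ being absolute below $\delta_1$. Theorem~\ref{theorem:Main}, applied inside $\LL[U_0]$, therefore yields a good $\Sigma_1(\kappa)$-wellordering $\lhd$ of $\POT{\kappa}^{\LL[U_0]}=\POT{\kappa}^{\LL[U_0,U_1]}$, together with a $\Sigma_1$-formula $\varphi(v_0,\kappa)$ defining $I(\lhd)$ in $\LL[U_0]$. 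The task then is to show that $\varphi$ continues to define $I(\lhd)$ in $\LL[U_0,U_1]$.

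This transfer is the heart of the argument and the step I expect to be the \emph{main obstacle}, since I must rule out the possibility that $U_1$ creates new $\Sigma_1$-witnesses that would enlarge the defined set. One inclusion is free: upward $\Sigma_1$-absoluteness gives that every $A\in I(\lhd)$, which satisfies $\varphi(A,\kappa)$ in $\LL[U_0]$, still satisfies it in $\LL[U_0,U_1]$. For the converse, suppose $\LL[U_0,U_1]\models\varphi(A,\kappa)$. Since $A$ is a proper initial segment of a wellordering of order-type $\kappa^+$, it has cardinality at most $\kappa$; forming a L\"owenheim--Skolem hull that contains $\kappa\cup\{A\}$, all elements of $A$, and a witness for $\varphi(A,\kappa)$, and then collapsing it transitively, produces a witnessing structure of cardinality $\kappa<\delta_1$ in which $A$, $\kappa$ and the witness are fixed. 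By the $V_{\delta_1}$-agreement this structure lies in $\LL[U_0]$, so the absoluteness of the $\Delta_0$-matrix of $\varphi$ gives $\LL[U_0]\models\varphi(A,\kappa)$ and hence $A\in I(\lhd)$. The delicate points will be the bookkeeping keeping $A$ and $\kappa$ fixed under the collapse, and checking that the witnessing structures produced by the proof of Theorem~\ref{theorem:Main} really can be taken of size below $\delta_1$.
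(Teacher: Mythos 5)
Your non-existence direction is exactly the paper's argument (Lemma \ref{lemma:MeasurableWOReals} plus Corollary \ref{corollary:SimplificationGCHNonMeasurable} applied to $\delta_0$, with $\delta_1$ measurable forcing $\kappa<\delta_1$), and that part is correct. The existence direction, however, rests on a false preliminary: it is \emph{not} true that $V_{\delta_1}^{\LL[U_0,U_1]}=V_{\delta_1}^{\LL[U_0]}$, nor even that $\POT{\omega}^{\LL[U_0,U_1]}=\POT{\omega}^{\LL[U_0]}$. The ultrapower map $\map{j_{U_1}}{\VV}{\Ult{\VV}{U_1}}$ fixes $U_0$ and restricts to a nontrivial elementary embedding $\map{j_{U_1}\restriction\LL[U_0]}{\LL[U_0]}{\LL[U_0]}$ with critical point $\delta_1$, definable in $\VV=\LL[U_0,U_1]$; by the relativized Kunen--Solovay argument this yields the sharp of $\LL[U_0]$ (essentially $0^\dagger$) inside $\LL[U_0,U_1]$, and this object --- a real --- cannot lie in $\LL[U_0]$. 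This is precisely the phenomenon $0^\#\in\LL[U]\setminus\LL$ one level up: a measure adds sets far below its critical point to the relativized constructible hierarchy, so the heuristic that the top measure ``does not affect the hierarchy below $\delta_1$'' fails already at the reals. Consequently $\POT{\kappa}^{\LL[U_0]}\subsetneq\POT{\kappa}$ for \emph{every} infinite $\kappa$ (a new real is a subset of every $\kappa$), so the wellordering you import from Theorem \ref{theorem:Main} inside $\LL[U_0]$ does not wellorder $\POT{\kappa}$ at all; the L\"owenheim--Skolem transfer step collapses for the same reason, since the collapsed witness structure need not land in $\LL[U_0]$.

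This gap is not patchable within your framework, and it explains the shape of the paper's actual proof: $\LL[U_0]$ must be replaced by Koepke's short core model $\KK[D]$ with $D=\{\delta_0\}\times U_0$, which \emph{does} absorb $\HHH{\delta_1}$ --- because $\HHH{\delta_1}\subseteq\Ult{\VV}{U_1}$, every $x\in\HHH{\delta_1}$ lies in the lower part of a $D$-mouse carrying one additional measure, and these mice are exactly the dagger-type objects your inner model misses. Lemma \ref{lemma:GoodWOinK} then gives a good $\Sigma_1(\kappa,U_0)$-wellordering of the true $\POT{\kappa}$, and the substantive remaining work is eliminating the parameter $U_0$: Lemmas \ref{lemma:TwoMeasurablesSingluarCard} and \ref{lemma:TwoMeasurablesSuccSingluarCard} show that $\{U_0\}$ is $\Sigma_1(\nu)$- respectively $\Sigma_1(\mu^+)$-definable, via comparison of $D$-mice and the iterated-ultrapower analysis of $\langle\KK[D],\in,U_0\rangle$. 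These are genuine two-measure analogues of Lemmas \ref{lemma:GoodWOSingular} and \ref{lemma:GoodWOSuccSingular}, proved with the mouse machinery rather than obtained by transferring Theorem \ref{theorem:Main} from an inner model; no reduction of the kind you propose is available.
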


This theorem shows that, parallel to the above results, it is possible that there are two measurable cardinals and good $\Sigma_1$-wellorderings exists at all cardinals that are not ruled out by the above lemmas.

\begin{corollary}
 In the setting of Theorem \ref{theorem:TwoMeasurables}, the following statements are equivalent for every infinite cardinal $\kappa$: 
 \begin{enumerate}
    \item Either $\kappa$ is countable or there is an $i<2$ and a $\delta_i$-inaccessible cardinal $\nu$ with the property that $\cof{\nu}\neq\delta_i$ and $\nu\leq\kappa\leq\nu^+$. 

  \item There is no good $\Sigma_1(\kappa)$-wellordering of $\POT{\kappa}$. 
 \end{enumerate}
\end{corollary}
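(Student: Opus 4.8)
The plan is to establish the equivalence by mirroring the proof of Corollary~\ref{corollary:CharacterizeAllGoodWOinLU}, but keeping track of both measurable cardinals. Throughout, I would use that the $\GCH$ holds in $\LL[U_0,U_1]$ and hence, exactly as in the proof of Corollary~\ref{corollary:CharacterizeAllGoodWOinLU}, that every limit cardinal greater than $\delta_i$ is $\delta_i$-inaccessible and that every successor of a regular cardinal greater than $\delta_i$ is $\delta_i$-inaccessible, for each $i<2$. I would also use that measurable cardinals are inaccessible, so that $\delta_1$ is itself a $\delta_0$-inaccessible limit cardinal with $\cof{\delta_1}=\delta_1\neq\delta_0$.

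For the implication from (i) to (ii), I would argue exactly as in Corollary~\ref{corollary:CharacterizeAllGoodWOinLU}: if $\kappa$ is countable then Lemma~\ref{lemma:MeasurableWOReals} applies, and if there is an $i<2$ and a $\delta_i$-inaccessible cardinal $\nu$ with $\cof{\nu}\neq\delta_i$ and $\nu\leq\kappa\leq\nu^+$, then Lemma~\ref{lemma:MeasurableWOabove} applied with $\delta=\delta_i$ yields that there is no good $\Sigma_1(\kappa)$-wellordering of $\POT{\kappa}$.

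For the converse I would assume that (i) fails and (ii) holds, and derive a contradiction. Since (i) fails, $\kappa$ is uncountable, and since (ii) holds, Theorem~\ref{theorem:TwoMeasurables} rules out the case $\omega<\kappa<\delta_0$, so $\kappa\geq\delta_0$. As in Corollary~\ref{corollary:CharacterizeAllGoodWOinLU}, the choice $\nu=\delta_0$ (note $\delta_0<\delta_1$ and $\cof{\delta_0}=\delta_0$) shows via Theorem~\ref{theorem:TwoMeasurables} that $\kappa\notin\{\delta_0,\delta_0^+\}$, hence $\kappa>\delta_0^+$. The heart of the argument is then to produce a base cardinal $\nu$ with $\nu\leq\kappa\leq\nu^+$ witnessing (i). I would set $\nu=\kappa$ if $\kappa$ is a limit cardinal or a successor of a regular cardinal, and $\nu=\mu$ if $\kappa=\mu^+$ with $\mu$ singular. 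Using the $\GCH$-facts above, in each case $\nu$ is a $\delta_0$-inaccessible cardinal with $\delta_0<\nu\leq\kappa\leq\nu^+$, and $\cof{\nu}=\delta_0$ can occur only when $\nu$ is singular.

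Finally I would split on $\cof{\nu}$. If $\cof{\nu}\neq\delta_0$, then $\nu$ already witnesses (i) with $i=0$. If $\cof{\nu}=\delta_0$, then $\nu$ is singular; were $\nu<\delta_1$, Theorem~\ref{theorem:TwoMeasurables} would provide a good $\Sigma_1(\kappa)$-wellordering of $\POT{\kappa}$, contradicting (ii), so $\nu\geq\delta_1$, and since $\nu$ is singular while $\delta_1$ is regular we obtain $\nu>\delta_1$; then $\nu$ is a limit cardinal above $\delta_1$, hence $\delta_1$-inaccessible, and $\cof{\nu}=\delta_0\neq\delta_1$, so $\nu$ witnesses (i) with $i=1$. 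In either case (i) holds, contradicting our assumption. I expect the main obstacle to be precisely this last dichotomy: the new phenomenon compared with the single-measurable Corollary~\ref{corollary:CharacterizeAllGoodWOinLU} is that a base cardinal of cofinality $\delta_0$ sitting at or above $\delta_1$ no longer supports a good wellordering, because of the constraint $\nu<\delta_1$ in Theorem~\ref{theorem:TwoMeasurables}, so one must instead appeal to the second measurable cardinal $\delta_1$, using $\delta_0\neq\delta_1$ to verify $\cof{\nu}\neq\delta_1$.
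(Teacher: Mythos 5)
Your proof is correct and takes essentially the same route as the paper's: the implication from (i) to (ii) via Lemma \ref{lemma:MeasurableWOReals} and Lemma \ref{lemma:MeasurableWOabove}, and the converse by using the $\GCH$ in $\LL[U_0,U_1]$ together with Theorem \ref{theorem:TwoMeasurables} to reduce to a base cardinal $\nu$ with $\nu\leq\kappa\leq\nu^+$, ending with the same key dichotomy in which a singular $\nu$ of cofinality $\delta_0$ is pushed above $\delta_1$ and then witnesses (i) with $i=1$. The only difference is cosmetic organization (you extract $\nu$ first and split on $\cof{\nu}$, while the paper splits on whether $\kappa$ is a limit or successor cardinal).
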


\begin{proof}
The implication from (i) to (ii) is proved in Lemma \ref{lemma:MeasurableWOReals} and Lemma \ref{lemma:MeasurableWOabove}. 
 Assume that (i) fails and (ii) holds. Then Theorem \ref{theorem:TwoMeasurables} implies that $\kappa>\delta_0^+$.

 First, assume that $\kappa$ is a limit cardinal. Then $\cof{\kappa}=\delta_0$, because otherwise the $\GCH$ in $\LL[U_0,U_1]$ (see {\cite[Theorem 3.6]{MR0344123}}) would imply that $\kappa$ is a $\delta_0$-inaccessible cardinal of cofinality different from $\delta_0$, contradicting the failure of (i). In this situation, Theorem \ref{theorem:TwoMeasurables} implies that $\kappa>\delta_1$ and the $\GCH$ implies that $\kappa$ is $\delta_1$-inaccessible, contradicting our assumptions.

 These computations show that $\kappa=\nu^+$ with $\nu>\delta_0$. Then $\nu$ is not regular, because otherwise the $\GCH$ would imply that $\kappa$ is $\delta_0$-inaccessible. Since $\nu$ is  a limit cardinal greater than $\delta_0$,  we know that $\nu$ is $\delta_0$-inaccessible and therefore  our assumption implies that $\cof{\nu}=\delta_0$. Then Theorem \ref{theorem:TwoMeasurables} shows that $\nu>\delta_1$ and hence $\nu$ is a $\delta_1$-inaccessible cardinal with $\cof{\nu}\neq\delta_1$, contradicting our assumption that (i) fails.  
\end{proof}

We outline the structure of this paper. In Section \ref{section:Core}, we briefly introduce some terminology to talk about the structure of so-called \emph{short core models} and then show that the canonical wellorders of these models are good $\Sigma_1$-wellorders. Section \ref{section:SingleMeasurable} contains the proof of Theorem \ref{theorem:Main} that relies on classical results of Kunen and Silver on the structure of models of the form $\LL[U]$. In Section \ref{section:TwoMeasurables}, we use results of Koepke on the structure of short core models to generalize the previous results to models of the form $\LL[U_0,U_1]$ and prove Theorem \ref{theorem:TwoMeasurables}. We close this paper by listing some open questions motivated by these results in Section \ref{section:questions}.


\section{Models of the form $\KK[D]$}\label{section:Core}

In this section, we construct good $\Sigma_1$-wellorderings in certain canonical models of set theory called \emph{short core models}. These models were studied by Koepke in \cite{PeterThesis} and \cite{MR926749}. Our results will rely on the outline of the structure theory of these models presented in \cite{MR926749}. Even though we will only consider short core model whose measure sequence has length at most two, we introduce the general terminology needed to construct and study these model. This will allow us to directly refer to the results presented in \cite{MR926749}.

\begin{definition}
 Let $D$ be a class. 
 \begin{enumerate}
  \item $D$ is \emph{simple} if the following statements hold: 
 \begin{enumerate}
  \item If $x\in D$, then there is $\delta\in\On$ with $x=\langle\delta,a\rangle$ with $a\subseteq\delta$.
  
  \item If $\langle\delta,a\rangle\in D$, then $\langle\delta,\delta\rangle\in D$.  
 \end{enumerate}
 In the above situation, we define $\dom{D}=\Set{\delta\in\On}{\langle\delta,\delta\rangle\in D}$ and $D(\delta)=\Set{a\subseteq\delta}{\langle\delta,a\rangle\in D}$ for all $\delta\in\dom{D}$. 
 
  \item $D$ is a \emph{sequence of measures} if $D$ is simple and $D(\delta)$ is a normal ultrafilter on $\delta$ for every $\delta\in\dom{D}$. 
 \end{enumerate}
\end{definition}

\begin{definition}
 Let $D$ be a simple set.

 \begin{enumerate}
  \item We say that $M=\langle\betrag{M},F_M\rangle$ is a \emph{premouse over $D$} if   
  the following statements hold: 
  \begin{enumerate}
   \item $\betrag{M}$ is a transitive set and $F_M$ is a simple set with the property that $\sup{(\dom{D})}<\min{(\dom{F_M})}\in\betrag{M}$. 

   \item $\langle\betrag{M},\in,F_M\rangle\models\anf{\textit{$F_M$ is a sequence of measures}}$. 

   \item $\betrag{M}=\JJ_{\alpha(M)}[D,F_M]$ for some ordinal $\alpha(M)$. 
  \end{enumerate}
  In the above situation, we define $\meas{M}=\dom{F_M}\cap{(\omega\cdot\alpha(M))}$ and $\lp{M}=\HHH{\min{(\meas{M}})}^{\betrag{M}}$.

 \item Given a premouse $M$ over $D$ and $\delta\in\meas{M}$, a premouse $M_*$ over $D$ is the \emph{ultrapower of $M$ at $\delta$} if there is a unique map $\map{j}{\betrag{M}}{\betrag{N}}$ with the following properties, and $M_*=N$ holds for this $N$.  
  \begin{enumerate}
   \item $\map{j}{\langle\betrag{M},\in,D,F_M\rangle}{\langle\betrag{N},\in,D,F_N\rangle}$ is $\Sigma_1$-elementary. 
   
   \item $\betrag{N}=\Set{j(f)(\delta)}{f\in{}^\delta\betrag{M}\cap\betrag{M}}$. 
   
   \item $F_M(\delta)\cap\betrag{M}=\Set{x\in\POT{\delta}\cap\betrag{M}}{\delta\in j(x)}$. 
  \end{enumerate}
  If such a premouse exists, then we denote it by $\Ult{M}{F(\delta)}$ and we call the corresponding map $j$ the \emph{ultrapower embedding of $M$ at $\delta$}.

 \item Given a premouse $M$ over $D$ and a function $\map{I}{\lambda}{\On}$ with $\lambda\in\On$, a system $$\mathrm{It}(M,I) ~ = ~ \langle\seq{M_\alpha}{\alpha\leq\lambda}, ~ \seq{j_{\alpha,\beta}}{\alpha\leq\beta\leq\lambda}\rangle$$ is called the \emph{iterated ultrapower of $M$ by $I$} if the following statements hold for all $\gamma\leq\lambda$: 
  \begin{enumerate}
   \item $M=M_0$ and $M_\gamma$ is a premouse over $D$. 
   
   \item Given $\alpha\leq\beta\leq\gamma$, $\map{j_{\beta,\gamma}}{\betrag{M_\beta}}{\betrag{M_\gamma}}$ is a function, $j_{\gamma,\gamma}=\id_{\betrag{M_\gamma}}$ and $j_{\alpha,\gamma}=j_{\beta,\gamma}\circ j_{\alpha,\beta}$. 
   
   \item If $\gamma<\lambda$ and $I(\gamma)\in\meas{M_\gamma}$, then $M_{\gamma+1}=\Ult{M_\gamma}{F_{M_\gamma}(I(\gamma))}$ and $j_{\gamma,\gamma+1}$ is the ultrapower embedding of $M_\gamma$ at $F_{M_\gamma}(I(\gamma))$. In the other case, if  $\gamma<\lambda$ and $I(\gamma)\notin\meas{M_\gamma}$, then $M_\gamma=M_{\gamma+1}$ and $j_{\gamma,\gamma+1}=\id_{\betrag{M_\gamma}}$. 
   
   \item If $\gamma\in\Lim$, then $\langle\langle M_\gamma,\in,D,F_{M_\gamma}\rangle, ~ \seq{j_{\beta,\gamma}}{\beta<\gamma}\rangle$ is a direct limit of the directed system $\langle\seq{M_\beta,\in,D,F_{M_\beta}}{\beta<\gamma}, ~ \seq{j_{\alpha,\beta}}{\alpha\leq\beta<\gamma}\rangle$. 
  \end{enumerate}
  In this situation, we let $M_I$ denote $M_\lambda$ and let $j_I$ denote $j_{0,\lambda}$. 
  
  \item A premouse $M$ over $D$ is \emph{iterable} if the system $\mathrm{It}(M,I)$ exists for every function $\map{I}{\lambda}{\On}$ with $\lambda\in\On$. 
  
  \item A premouse $M$ over $D$ is \emph{short} if one of the following statements holds:
   \begin{enumerate}
    \item $D=\emptyset$ and $\mathrm{otp}(\meas{M}\cap\gamma)<\min(\meas{M})$ for all $\gamma\in\betrag{M}\cap\On$. 
    
    \item $D\neq\emptyset$ and $\mathrm{otp}(\meas{M})\leq\min(\dom{D})$. 
   \end{enumerate}
   
  \item A $D$-mouse is an iterable short premouse over $D$. 
  
  \item If either $D=\emptyset$ or $\otp{\dom{D}}\leq\min{(\dom{D})}$, then we define $$K[D] ~ = ~ \Set{\lp{M}}{\textit{$M$ is a $D$-mouse}}$$ and, given $x,y\in\KK[D]$, we write $x<_{\KK[D]} y$ to denote that $x<_{\LL[D,F_M]}y$ holds for every $D$-mouse $M$ with $x,y\in\lp{M}$. 
 \end{enumerate}
\end{definition}

In the following, we omit the parameter $D$ from the above notations if it is equal to the empty set, i.e. \emph{mouse} means $\emptyset$-mouse, we write $\KK$ instead of $\KK[\emptyset]$ etc. 

\begin{lemma}\label{lemma:GoodWOinK}
 Assume that $D$ is a simple set with the property that either $D=\emptyset$ or $\otp{\dom{D}}\leq\min{(\dom{D})}$. If $\VV=\KK[D]$ holds and $\kappa$ is an uncountable cardinal, then there is a good $\Sigma_1(\kappa,D)$-wellordering of $\POT{\kappa}$. 
\end{lemma}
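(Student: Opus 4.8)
The plan is to show that the restriction $\lhd \,=\, {<_{\KK[D]}}\restriction\POT{\kappa}$ of the canonical wellordering to $\POT{\kappa}$ is a good $\Sigma_1(\kappa,D)$-wellordering. Since $\VV=\KK[D]$, this relation is a genuine wellordering of $\POT{\kappa}$, so the proper initial segments of $\lhd$ are exactly the sets $\Set{y\in\POT{\kappa}}{y<_{\KK[D]}x}$ for $x\in\POT{\kappa}$, and the only thing to verify is that the collection $I(\lhd)$ of these initial segments is $\Sigma_1(\kappa,D)$-definable. This mirrors the classical computation in $\LL$, where $I({<_\LL}\restriction\POT{\kappa})$ is defined by \anf{there is a transitive $N=\LL_\alpha$ and an $x\in\POT{\kappa}^N$ with $A=\Set{y\in\POT{\kappa}^N}{y<_{\LL_\alpha}x}$}, a plainly $\Sigma_1$ condition because \anf{$N=\LL_\alpha$} is $\Sigma_1$.

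The candidate formula asserts, for a set $A$, the existence of a $D$-mouse $M$ and an $x\in\POT{\kappa}\cap\lp{M}$ with
$$ A \;=\; \Set{y\in\POT{\kappa}\cap\lp{M}}{y <_{\LL[D,F_M]} x}. $$
First I would isolate the two structural facts from Koepke's development in \cite{MR926749} that guarantee the matrix of this formula computes the correct initial segment. The first is \emph{coherence}: any two $D$-mice agree on the $<_{\LL[D,F_M]}$-order of common elements of their lower parts, so that for a single $M$ the local order $<_{\LL[D,F_M]}$ restricted to $\lp{M}$ agrees with $<_{\KK[D]}$; this is precisely the well-definedness underlying the definition of $<_{\KK[D]}$. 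The second, which I would state as a separate claim, is that the canonical wellordering \emph{respects levels}: if $x\in\lp{M}$ and $y<_{\KK[D]}x$ with $y\subseteq\kappa$, then already $y\in\lp{M}$. This is the mouse analogue of the fact that in $\LL$ every $<_\LL$-predecessor of a set appears no later than that set, and it follows from the same feature of the canonical wellordering of the fine hierarchy of $\KK[D]$ together with the acceptability of premice.

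Granting these, both directions are short. If $A=\Set{y\in\POT{\kappa}}{y<_{\KK[D]}x}$ for some $x\in\POT{\kappa}$, then, since $\VV=\KK[D]$, we may choose a $D$-mouse $M$ with $x\in\lp{M}$ (which forces $\kappa<\min(\meas{M})$); level-respecting yields $\POT{\kappa}\cap\lp{M}\supseteq A$ and coherence turns the displayed set into $A$, so the formula holds of $A$. Conversely, if the formula holds of $A$ via some $M$ and $x$, then coherence rewrites the displayed set as $\Set{y\in\POT{\kappa}\cap\lp{M}}{y<_{\KK[D]}x}$, and level-respecting shows this already exhausts $\Set{y\in\POT{\kappa}}{y<_{\KK[D]}x}$; hence $A$ is a genuine proper initial segment of $\lhd$.

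The main obstacle is verifying that the displayed formula is in fact $\Sigma_1(\kappa,D)$. Everything after the existential quantifier over $M$ is uniformly $\Delta_0$ in $M$, $\kappa$ and $A$ — computing $<_{\LL[D,F_M]}$ and forming the indicated subset of $\POT{\kappa}\cap\lp{M}$ are first-order over $M$ — so the entire complexity is concentrated in the predicate \anf{$M$ is a $D$-mouse}. The clauses \anf{premouse over $D$}, \anf{short} and \anf{$\betrag{M}=\JJ_{\alpha(M)}[D,F_M]$} are $\Sigma_1(D)$ by the usual absoluteness of the $\JJ$-hierarchy, but \emph{iterability} is prima facie $\Pi_1$, and this is the technical heart. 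Here I would exploit that we work inside $\VV=\KK[D]$: concretely, I would replace \anf{$M$ is a $D$-mouse} by \anf{$M$ is a premouse over $D$ that is an initial segment of the canonical hierarchy of $\KK[D]$}, a predicate that is $\Sigma_1(D)$-definable (its levels are generated by a $\Sigma_1(D)$ recursion) and whose instances are automatically iterable; since $\VV=\KK[D]$, every $x\subseteq\kappa$ lies in such a level, so no initial segment of $\lhd$ is lost and the equivalence with the mouse-formula is preserved. Checking that this reduction is legitimate — equivalently, that the comparison/coiteration absoluteness for short mice established in \cite{MR926749} renders the relevant iterability certificate $\Sigma_1(D)$ in the core model — is the step I expect to require the most care; the remaining bookkeeping then parallels the classical $\LL$-argument.
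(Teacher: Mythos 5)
Your overall architecture coincides with the paper's: restrict $<_{\KK[D]}$ to $\POT{\kappa}$, prove the coherence claim that any two $D$-mice order common elements of their lower parts the same way, note that lower parts of mice containing $\kappa$ are $\lhd$-downwards closed (your \anf{level-respecting} claim), and then define $I(\lhd)$ by existentially quantifying over a $D$-mouse $M$ and reading off an initial segment of $<_{\LL[D,F_M]}$ inside $M$. Up to this point the proposal is sound and matches the paper's proof of the lemma.

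The gap is precisely at the step you yourself flag as delicate: the $\Sigma_1(\kappa,D)$-definability of the predicate \anf{$M$ is a $D$-mouse}. Your proposed substitute --- \anf{$M$ is a premouse over $D$ that is an initial segment of the canonical hierarchy of $\KK[D]$} --- is circular: in Koepke's development, $\KK[D]$ is \emph{defined} as the union of the lower parts of $D$-mice, so any description of \anf{its canonical hierarchy} already passes through the iterability predicate you are trying to eliminate. The parenthetical justification \anf{its levels are generated by a $\Sigma_1(D)$ recursion} is asserted, not proved, and no such $\JJ$-hierarchy-style recursion for $\KK[D]$ is available in \cite{MR926749}; unlike $\LL$ or $\LL[A]$, the levels of the core model are not produced by a simple $\Sigma_1$ recursion, which is exactly why the classical $\LL$-computation you invoke as a template does not transfer verbatim. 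What actually closes the argument --- and what the paper uses --- is the absoluteness theorem {\cite[Theorem 2.7]{MR926749}}: there is a finite fragment ${\sf F}$ of $\ZFC$ such that \anf{$M$ is a $D$-mouse} is absolute between $\VV$ and transitive models of ${\sf F}$ containing $\kappa$ and $M$. Hence mousehood is $\Sigma_1(\kappa,D)$ outright, via \anf{there exists a transitive $P\models{\sf F}$ with $M,\kappa\in P$ such that $P$ believes $M$ is a $D$-mouse}, and substituting this into your displayed formula (with no detour through a hierarchy of $\KK[D]$) completes the proof. Note also that the hypothesis $\VV=\KK[D]$ is needed only to guarantee that $<_{\KK[D]}$ wellorders all of $\POT{\kappa}$ and that every subset of $\kappa$ appears in the lower part of some $D$-mouse; it plays no role in, and cannot be leveraged to simplify, the definability computation itself, since a $\Sigma_1$ definition must be evaluated in $\VV$ without reference to how $\VV$ was presented.
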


\begin{proof}
 By {\cite[Theorem 3.4]{MR926749}}, we know that $<_{\KK[D]}$ is a wellordering of $\VV$. 
 Let $\lhd$  denote the restriction of $<_{\KK[D]}$ to $\POT{\kappa}$. 

 \begin{claim*}
   If $M$ and $N$ are $D$-mice and $x_0,x_1\in\lp{M}\cap\lp{N}$, then  $x_0<_{\LL[D,F_M]}x_1$ implies that $x_0<_{\LL[D,F_N]}x_1$.
 \end{claim*}
 
 \begin{proof}[Proof of the Claim]
 We have that $x$ and $y$ are comparable in $<_{\KK[D]}$. Assuming that $x_0<_{\LL[D,F_M]}x_1$, it follows that $x_0<_{\KK[D]}x_1$. Thus also $x_0<_{\LL[D,F_N]}x_1$  by the definition of  $<_{\KK[D]}$.   
 \end{proof}

 The above claim shows that $x\lhd y$ holds if and only if there is a $D$-mouse $M$ with $x,y\in\lp{M}$ and $x<_{\LL[D,F_M]}y$. Moreover, every $D$-mouse $M$ with $\kappa\in\lp{M}$ is $\lhd$-downwards closed. In particular, a set $X$ is contained in the set $I(\lhd)$ of all initial segments of $\lhd$ if and only if there is a $D$-mouse $M$ such that $\kappa,X\in\lp{M}$ and $X$ is an initial segment of the restriction of $<_{\LL[D,F_M]}$ to $\POT{\kappa}$ in $\langle\betrag{M},\in,D,F_M\rangle$.

 Since {\cite[Theorem 2.7]{MR926749}} shows that there is a finite fragment $\sf F$ of $\ZFC$ such that  the statement \anf{\emph{$M$ is a $D$-mouse}} is absolute between $\VV$ and transitive models of $\sf F$ containing $\kappa$ and $M$, we can conclude that it is possible to define the collection of all $D$-mice by a $\Sigma_1$-formula with parameters $\kappa$ and $D$. By the above computations, this shows  that the set $I(\lhd)$ can be defined by a $\Sigma_1$-formula that uses only $\kappa$ and $D$ as parameters. 
\end{proof}

\begin{proof}[Proof of Lemma \ref{lemma:WOinKDJ}]
 Assume that $\VV=\KK^{DJ}$ holds and let $\kappa$ be an uncountable cardinal. If there are no mice, then the results of {\cite[Section 6]{MR611394}} show that $\KK^{DJ}=\LL$ and the restriction of the canonical wellordering of $\LL$ to $\POT{\kappa}$ is a good $\Sigma_1(\kappa)$-wellordering. 
Hence we may assume that there is a mouse. In this situation, results of Dodd and Jensen (see  {\cite[p. 238]{MR730856}}) show that $\KK^{DJ}$ is equal to the union of all $\lp{M}$, where $M$ is a mouse with $\otp{\meas{M}}=1$. In particular, $\KK^{DJ}=\KK$ holds in this case. In this situation, Lemma \ref{lemma:GoodWOinK} directly implies that there is a good $\Sigma_1(\kappa)$-wellordering of $\POT{\kappa}$. 
\end{proof}

The above result allows us to construct good $\Sigma_1$-wellorderings in canonical inner models at uncountable cardinals that are less than or equal to the unique measurable cardinal in these models.

\begin{corollary}\label{corollary:GoodWObelowMeasurable}
  Assume that $\delta$ is a measurable cardinal and $U$ is a normal ultrafilter on $\delta$ such that $\VV=\LL[U]$ holds. If $\kappa\leq\delta$ is an uncountable cardinal, then there is a good $\Sigma_1(\kappa)$-definable wellorder of $\POT{\kappa}$.  
\end{corollary}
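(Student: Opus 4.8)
The plan is to reduce this to the mouse-based wellordering from the proof of Lemma~\ref{lemma:GoodWOinK}, applied to the core model $\KK=\KK[\emptyset]$ computed inside $\LL[U]$. As $\KK\subseteq\LL[U]$, we have $\POT{\kappa}^{\KK}\subseteq\POT{\kappa}^{\LL[U]}$, and the crucial point is the reverse inclusion: I would first show that $\POT{\kappa}^{\LL[U]}=\POT{\kappa}^{\KK}$ holds for every uncountable cardinal $\kappa\leq\delta$, i.e. that every subset of $\kappa$ lying in $\LL[U]$ already lies in a mouse. Granting this, the restriction $\lhd$ of $<_{\KK}$ to $\POT{\kappa}$ wellorders $\POT{\kappa}^{\LL[U]}$, and I would verify that it is the desired good $\Sigma_1(\kappa)$-wellordering by rerunning the computation in the proof of Lemma~\ref{lemma:GoodWOinK}.

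For the agreement statement, fix $x\subseteq\kappa$ with $x\in\LL[U]$. The subtlety is that realizing $x$ in a mouse $M$ requires $x\in\lp{M}=\HHH{\min{(\meas{M})}}^{\betrag{M}}$, forcing the least measure of $M$ to lie strictly above $\kappa$; for $\kappa=\delta$ this asks for a mouse whose measure sits above $\delta$, although $U$ is the only measure in $\LL[U]$. I would handle all cases uniformly by passing to the ultrapower $\map{j}{\LL[U]}{N=\Ult{\LL[U]}{U}}$, where $N=\LL[U_1]$ for the normal ultrafilter $U_1=j(U)$ on $\delta_1=j(\delta)>\delta$ and $\crit{j}=\delta$. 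Since $j(x)\cap\delta=x$ for subsets $x$ of $\delta$, every subset of $\kappa$ from $\LL[U]$ also lies in $N$, and $N$ now carries a measure at $\delta_1>\kappa$. Choosing an elementary substructure $H\prec\langle\JJ_\gamma[U_1],\in,U_1\rangle$ of cardinality $\kappa$ with $x\in\JJ_\gamma[U_1]$, $x\in H$, $\kappa\in H$ and $\kappa\subseteq H$, the transitive collapse $\map{\pi}{\bar M}{\langle\JJ_\gamma[U_1],\in,U_1\rangle}$ fixes $\kappa$ and every ordinal below it, so $x$ is not moved and the collapsed measure sits on $\bar\delta_1=\pi^{-1}(\delta_1)>\kappa$. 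Then $\bar M$ is a premouse over $\emptyset$ carrying a single measure; it is short because $\otp{\meas{\bar M}}=1<\bar\delta_1=\min{(\meas{\bar M})}$, and it is iterable since well-foundedness of its iterated ultrapowers is inherited from the well-foundedness of the iterated ultrapowers of $\LL[U]$. Hence $\bar M$ is a mouse with $x\in\HHH{\bar\delta_1}^{\betrag{\bar M}}=\lp{\bar M}$, so $x\in\KK$.

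Granting the agreement, each proper initial segment $X=\Set{x\in\POT{\kappa}}{x\lhd y}$ is definable over $\KK$ from $\kappa$ and $y$ via the definable wellordering $<_{\KK}$, so $X\in\KK$ and hence $\kappa,X\in\lp{M}$ for a common mouse $M$. Reproducing the computation from the proof of Lemma~\ref{lemma:GoodWOinK}, a set $X$ lies in $I(\lhd)$ if and only if there is a mouse $M$ with $\kappa,X\in\lp{M}$ such that, inside $M$, the set $X$ is an initial segment of the restriction of $<_{\LL[F_M]}$ to $\POT{\kappa}$; the comparability of mice and the $\lhd$-downward closure of $\lp{M}$ for mice $M$ with $\kappa\in\lp{M}$ make this equivalence valid without assuming $\VV=\KK$. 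Since \cite[Theorem~2.7]{MR926749} shows that \anf{\emph{$M$ is a mouse}} is $\Sigma_1(\kappa)$ and absolute between $\LL[U]$ and transitive models of a finite fragment of $\ZFC$ containing $\kappa$ and $M$, with no reference to $U$, the set $I(\lhd)$ is $\Sigma_1(\kappa)$-definable, making $\lhd$ a good $\Sigma_1(\kappa)$-wellordering of $\POT{\kappa}$. The main obstacle is the agreement statement at the boundary $\kappa=\delta$: here one must leave $\LL[U]$, pass to an iterate, and condense it to produce a mouse whose least measure exceeds $\delta$, and the delicate part is checking that this collapse really is an iterable short premouse with least measure above $\kappa$.
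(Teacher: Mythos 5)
Your argument is correct, and it takes a genuinely different route from the paper's. The paper's proof is two lines: it quotes the Dodd--Jensen result ({\cite[Section 6]{MR611394}}) that under $\VV=\LL[U]$ the core model $\KK^{DJ}$ is the intersection of all iterated ultrapowers of $\langle\VV,\in,U\rangle$, so that $\POT{\delta}\subseteq\KK^{DJ}$, and then invokes Lemma \ref{lemma:WOinKDJ}, whose proof identifies $\KK^{DJ}$ with $\KK$ via the representation of $\KK^{DJ}$ as the union of the lower parts of one-measure mice. You instead manufacture the required mice by hand: you pass to $N=\Ult{\VV}{U}=\LL[U_1]$, whose measurable cardinal $\delta_1=j(\delta)$ lies strictly above $\kappa$ even in the boundary case $\kappa=\delta$, use $\crit{j}=\delta$ to see $\POT{\kappa}\subseteq N$, and condense a hull of $\langle\JJ_\gamma[U_1],\in,U_1\rangle$ to a short premouse $\bar M$ with $x\in\lp{\bar M}$. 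One small imprecision: iterability of $\bar M$ is not literally \anf{inherited from the iterated ultrapowers of $\LL[U]$}; the standard justification is the copying/realization argument along the collapse embedding into the iterable $\LL[U_1]$, or countable completeness of $U_1$ in $\VV$ (which holds since $N$ is closed under $\omega$-sequences) --- but this is routine. Your route buys two things the paper leaves implicit: it replaces the cited structure theory of $\KK^{DJ}$ by a self-contained ultrapower-plus-condensation argument, and, more importantly, it explicitly addresses that Lemma \ref{lemma:WOinKDJ} is stated under the hypothesis $\VV=\KK^{DJ}$, whereas here $\VV=\LL[U]$; your rerun of the computation from Lemma \ref{lemma:GoodWOinK}, checking that the $\Sigma_1(\kappa)$ definition of $I(\lhd)$ needs only $\POT{\kappa}^{\VV}=\POT{\kappa}^{\KK}$ together with the $\ZFC$-provable comparability of mice (rather than $\VV=\KK$), supplies exactly the justification on which the paper's terser proof tacitly relies. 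Your identification of $\kappa=\delta$ as the case forcing the passage to an iterate matches the reason the paper's proof routes through iterated ultrapowers in the first place.
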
 

\begin{proof} 
 By {\cite[Section 6]{MR611394}}, we know that our assumptions imply that $\KK^{DJ}$ is equal to the intersection of all iterated ultrapowers of $\langle\VV,\in,U\rangle$. In particular, we know that  $\POT{\delta}\subseteq\KK^{DJ}$. In this situation, Lemma \ref{lemma:WOinKDJ} directly implies that there is a good $\Sigma_1(\kappa)$-definable wellorder of $\POT{\kappa}$ for every uncountable cardinal $\kappa\leq\delta$.  
\end{proof}

\begin{corollary}\label{corollary:LowWOTwoMeasurables}
 Let $\delta_0<\delta_1$ be measurable cardinals, let $U_0$ be a normal ultrafilter on $\delta_0$ and let $\delta_1$ be a normal ultrafilter on $\delta_1$. Assume that $\VV=\LL[U_0,U_1]$ holds. If $\kappa\leq\delta_0$ is an uncountable cardinal, then there is a good $\Sigma_1(\kappa)$-definable wellorder of $\POT{\kappa}$.  
\end{corollary}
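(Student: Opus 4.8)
The plan is to reduce the statement to the single measurable case established in Corollary~\ref{corollary:GoodWObelowMeasurable} by iterating the top normal ultrafilter $U_1$ out of the universe. The guiding observation is that $\kappa\leq\delta_0<\delta_1$, so that $\delta_1$ is the critical point of the ultrapower embedding induced by $U_1$: iterating $U_1$ therefore fixes every subset of $\delta_0$, while in the limit it erases the measurable cardinal $\delta_1$ and leaves behind the inner model $\LL[U_0]$, to which Corollary~\ref{corollary:GoodWObelowMeasurable} applies directly.

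Concretely, I would form the iterated ultrapower of $\langle\VV,\in,U_1\rangle$ along all the ordinals, obtaining a commuting system of elementary embeddings $\map{j_{0,\alpha}}{\VV}{N_\alpha}$ whose successive critical points are cofinal in $\On$, and set $N=\bigcap_{\alpha\in\On}N_\alpha$. Since $U_0$ has rank below $\delta_1$, it is fixed by every $j_{0,\alpha}$, so $N_\alpha=\LL[U_0,j_{0,\alpha}(U_1)]$, and the structure theory of models of the form $\LL[U_0,U_1]$ in \cite{MR0344123} and \cite{MR926749} yields $N=\LL[U_0]$. The key preservation fact is that for every $a\subseteq\delta_0$ and every $\alpha\in\On$ we have $a=j_{0,\alpha}(a)\cap\delta_0\in N_\alpha$, because $j_{0,\alpha}$ fixes $\delta_0$ together with all of its elements; hence $a\in N$, and therefore $\POT{\delta_0}^\VV=\POT{\delta_0}^N=\POT{\delta_0}^{\LL[U_0]}$. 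In particular $\POT{\kappa}^\VV=\POT{\kappa}^{\LL[U_0]}$, and $\kappa$ remains an uncountable cardinal in $\LL[U_0]$, being one in the larger model $\VV$.

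Since $\LL[U_0]$ satisfies the hypothesis $\VV=\LL[U_0]$ of Corollary~\ref{corollary:GoodWObelowMeasurable}, with $\delta_0$ as its unique measurable cardinal, that corollary applied inside $\LL[U_0]$ produces a good $\Sigma_1(\kappa)$-wellordering $\lhd$ of $\POT{\kappa}^{\LL[U_0]}$. As $\lhd$ already wellorders $\POT{\kappa}^\VV=\POT{\kappa}^{\LL[U_0]}$, it only remains to see that the $\Sigma_1$-formula defining the set $I(\lhd)$ of proper initial segments---which, by the proofs of Lemma~\ref{lemma:GoodWOinK} and Lemma~\ref{lemma:WOinKDJ}, quantifies over mice and their canonical orderings with the single parameter $\kappa$---defines the same set over $\VV$. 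This follows from the absoluteness of mousehood between transitive models of a finite fragment of $\ZFC$ (\cite[Theorem~2.7]{MR926749}), together with the equality $\POT{\kappa}^\VV=\POT{\kappa}^{\LL[U_0]}$, which ensures both that no new proper initial segments arise over $\VV$ and that the same premice witness the $\Sigma_1(\kappa)$-definition upwards. Phrased differently, the argument shows that $\POT{\delta_0}^\VV\subseteq\KK^{DJ}$ and then invokes Lemma~\ref{lemma:WOinKDJ} exactly as in the proof of Corollary~\ref{corollary:GoodWObelowMeasurable}.

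I expect the main obstacle to lie in this transfer, in two respects. First, one must justify that the iterate limit $N$ is genuinely $\LL[U_0]$, rather than merely some inner model containing $\POT{\delta_0}$; this is where the structure theory of \cite{MR0344123} and \cite{MR926749} enters, although for the conclusion it would already suffice to verify $\POT{\delta_0}^\VV\subseteq\KK^{DJ}$. Second, one must be certain that passing from $\LL[U_0]$ to $\VV$ neither adds subsets of $\kappa$---so that $\lhd$ stays total and its proper initial segments are unchanged---nor alters which premice realize the $\Sigma_1$-definition of $I(\lhd)$; here the preservation computation $\POT{\delta_0}^\VV=\POT{\delta_0}^{\LL[U_0]}$ secures the former and the mouse-absoluteness of \cite{MR926749} the latter.
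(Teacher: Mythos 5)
Your reduction breaks at its central identification. The intersection $N=\bigcap_{\alpha\in\On}N_\alpha$ of the iterates of $\langle\VV,\in,U_1\rangle$ is \emph{not} $\LL[U_0]$, and consequently the equality $\POT{\delta_0}^{\VV}=\POT{\delta_0}^{\LL[U_0]}$ is false. The one-measure analogue already signals the problem: as the paper recalls in the proof of Corollary~\ref{corollary:GoodWObelowMeasurable}, in $\LL[U]$ the intersection of all iterated ultrapowers is the Dodd--Jensen core model $\KK^{DJ}$, which strictly exceeds $\LL$ (it contains $0^{\#}$, since a measurable cardinal exists). Here the same phenomenon happens one level up: since $\delta_1$ is measurable and $U_0\in\HHH{\delta_1}$, the sharp of $\LL[U_0]$ exists in $\VV=\LL[U_0,U_1]$. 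Concretely, a countable elementary hull of a suitable $\JJ_\alpha[\{\delta_0\}\times U_0,\{\delta_1\}\times U_1]$ collapses to a countable iterable premouse with two measures, and such a mouse (equivalently, a $0^{\dagger}$-like real) cannot belong to $\LL[U_0]$: otherwise $\LL[U_0]$ could iterate it out and obtain indiscernibles for an iterate of itself, the usual Kunen-style contradiction. So $\VV$ contains reals, hence subsets of \emph{every} uncountable $\kappa\leq\delta_0$, that are not in $\LL[U_0]$; your transferred $\lhd$ is therefore not even a total relation on $\POT{\kappa}^{\VV}$, and no absoluteness argument about $I(\lhd)$ can repair that. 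What the iteration of $U_1$ actually leaves behind is the core model $\KK[\{\delta_0\}\times U_0]$, which strictly contains $\LL[U_0]$ --- which is exactly why the paper works with $\KK[D]$ rather than $\LL[U_0]$ in Section~\ref{section:TwoMeasurables}.

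Note also that the obvious repair --- replacing $\LL[U_0]$ by $\KK[\{\delta_0\}\times U_0]$ --- does not finish the proof either: Lemma~\ref{lemma:GoodWOinK} then only yields a good $\Sigma_1(\kappa,U_0)$-wellordering, and for $\kappa\leq\delta_0$ the extra parameter $U_0$ cannot be eliminated (Lemmas~\ref{lemma:TwoMeasurablesSingluarCard} and~\ref{lemma:TwoMeasurablesSuccSingluarCard} make $\{U_0\}$ $\Sigma_1$-definable only from cardinals of cofinality $\delta_0$ in the interval $(\delta_0,\delta_1)$ and their successors). The paper avoids both obstacles at once: since $\Ult{\VV}{U_0}$ is closed under $\delta_0$-sequences, $\POT{\delta_0}\subseteq\Ult{\VV}{U_0}$, and inside that ultrapower every $x\subseteq\delta_0$ lies in $\lp{M}$ for a mouse $M$ over $D=\emptyset$ with $\otp{\meas{M}}=2$; hence $\POT{\delta_0}\subseteq\KK$, and Lemma~\ref{lemma:GoodWOinK} with $D=\emptyset$ gives a wellordering with parameter $\kappa$ alone. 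Your closing fallback --- that it would suffice to verify $\POT{\delta_0}^{\VV}\subseteq\KK^{DJ}$ --- does name the right statement and is essentially this argument, but your only justification for it is the false equality with $\POT{\delta_0}^{\LL[U_0]}$; the witnessing mice must carry two measures, and your construction never produces them.
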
 

\begin{proof}
 Since $\POT{\delta_0}\subseteq\Ult{\VV}{U_0}$, we know that for every subset $x$ of $\delta_0$, there is a mouse $M$ with $x\in\lp{M}$ and $\otp{\meas{M}}=2$. Hence $\POT{\delta_0}\subseteq\KK$ and Lemma \ref{lemma:GoodWOinK} shows that there is a good $\Sigma_1(\kappa)$-wellordering of $\POT{\kappa}$ for every uncountable cardinal $\kappa\leq\delta_0$.
\end{proof}


\section{Models of the form $L[U]$}\label{section:SingleMeasurable}

In this section, we will prove Theorem \ref{theorem:Main}. Throughout this section, we will work in the setting of the theorem: \emph{$\delta$ is a measurable cardinal and $U$ is a normal ultrafilter on $\delta$ with the property that $\VV=\LL[U]$ holds}.

We start by showing that the unique normal filter in $\LL[U]$ is $\Sigma_1$-definable from certain cardinals above the unique measurable cardinal of  $\LL[U]$. These arguments heavily rely on results of Kunen and Silver (see \cite{MR0277346} and \cite{MR0278937}) on the structure of models of the form $\LL[U]$. In the following, we will refer to the presentation of these results in {\cite[Section 20]{MR1994835}}.

\begin{lemma}\label{lemma:GoodWOSingular}
  In the setting of Theorem \ref{theorem:Main}, if $\nu>\delta$ is a cardinal with $\cof{\nu}=\delta$, then the set $\{U\}$ is $\Sigma_1(\nu)$-definable. 
\end{lemma}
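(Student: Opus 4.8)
The plan is to produce a $\Sigma_1$-formula $\psi(v_0,v_1)$ for which, in $\VV=\LL[U]$, the measure $U$ is the \emph{unique} $v$ satisfying $\psi(v,\nu)$, so that $\{U\}=\Set{v}{\psi(v,\nu)}$ exhibits the desired $\Sigma_1(\nu)$-definability. The guiding picture comes from the Kunen--Silver theory of iterated ultrapowers of $\LL[U]$ (see {\cite[Section 20]{MR1994835}}): writing $\map{j_{0\nu}}{\LL[U]}{M_\nu}$ for the $\nu$-th iterate of $U$, the critical sequence $\seq{\delta_\alpha}{\alpha\le\nu}$ is strictly increasing and continuous, and since $\cof{\nu}=\delta<\nu$ forces $\nu$ to be a limit cardinal, the $\GCH$ in the iterates (giving $\delta_{\alpha+1}<\delta_\alpha^{++}$) yields $\delta_\alpha<\nu$ for all $\alpha<\nu$ and hence $\delta_\nu=\nu$. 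Moreover, because the tail embeddings $j_{\alpha,\nu}$ have critical point above $\delta$, one checks that $A\in U$ if and only if $\delta\in j_{0\nu}(A)$ for every $A\subseteq\delta$. Thus $U$ is recoverable from the iteration that reaches $\nu$, and the whole task is to make this recovery $\Sigma_1$ in the single parameter $\nu$.

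Concretely, I would let $\psi(v,\nu)$ assert the existence of ordinals $\delta<\nu<\theta$ and a transitive set $N=\LL_\theta[v]$ with $\nu\in N$ such that: (a) $N\models\anf{v\text{ is a normal ultrafilter on }\delta}$; (b) $N$ satisfies $\cof{\nu}=\delta$; and (c) $N$ is an iterable premouse. The role of clause (b) is exactly where the hypothesis $\cof{\nu}=\delta$ enters: it definably pins the intended measurable $\delta$ to the parameter $\nu$, namely as the cofinality of $\nu$. Existence of a witness for the genuine $U$ is then routine. Choosing $\theta$ large enough that $\LL_\theta[U]$ contains a cofinal map $\map{f}{\delta}{\nu}$ and is closed under the relevant iterated-ultrapower computations, the level $N=\LL_\theta[U]$ satisfies (a) by Kunen's uniqueness theorem, satisfies (b) because $\mathrm{cof}^{\LL[U]}(\nu)=\delta$ and $f\in N$, and is iterable by the Kunen--Silver analysis of $\LL[U]$. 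Hence $U$ satisfies $\psi(\cdot,\nu)$.

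It remains to check that $\psi$ is genuinely $\Sigma_1(\nu)$ and that it isolates $U$. For the first, statements (a) and (b) are bounded once the set $N$ is given, and iterability in (c) can be rendered $\Sigma_1$ by the device already used in Lemma \ref{lemma:GoodWOinK}: by {\cite[Theorem 2.7]{MR926749}} there is a finite fragment of $\ZFC$ whose transitive models compute iterability correctly, so \anf{$N$ is iterable} is equivalent to the existence of such a model containing $N$, and $\psi$ becomes $\Sigma_1(\nu)$. For uniqueness, suppose $v,\bar\delta,N'=\LL_\theta[v]$ also satisfy (a)--(c). I would coiterate the iterable premouse $N'$ against a sufficiently tall level of $\LL[U]$ carrying $U$; using the comparison theorem for these linear iterations together with clause (b), the fact that $N'$ satisfies $\cof{\nu}=\bar\delta$ while the true cofinality of $\nu$ in $\LL[U]$ is $\delta$ should force $\bar\delta=\delta$. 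Once $\bar\delta=\delta$, both $v$ and $U$ are normal ultrafilters on $\delta$ inside $\LL[U]$, so Kunen's uniqueness of the measure gives $v=U$.

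The main obstacle is precisely this uniqueness step: one must rule out \emph{fake} witnesses, that is, iterable premice carrying a normal measure on some $\bar\delta\neq\delta$ that nonetheless satisfy clause (b) for the given $\nu$. Making this rigorous requires the coiteration analysis and careful bookkeeping of how cofinalities transfer among the set-sized witness $N'$, the common iterate, and $\VV$; clause (b) is the mechanism that makes $\bar\delta$ recoverable from $\nu$, and verifying that it does so is the crux of the argument. A secondary, already-isolated technical point is the $\Sigma_1$-expression of iterability, which is handled exactly as in the previous section.
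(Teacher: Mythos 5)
Your skeleton agrees with the paper's: a $\Sigma_1$-formula asserting the existence of a transitive witness of the form $\LL_\theta[v]$ that believes $v$ is a normal ultrafilter on some $\bar\delta$ and that $\cof{\nu}=\bar\delta$; existence witnessed by the true $U$; uniqueness reduced to showing $\bar\delta=\delta$ and then invoking Kunen's uniqueness of the normal measure. But the step you yourself flag as the crux --- ``clause (b) should force $\bar\delta=\delta$'' --- is exactly the content of the lemma, and coiteration plus clause (b) do not deliver it as stated; two ingredients are missing. First, your cofinality clause holds only in the set-sized witness $N'$, whereas any comparison argument for $\kappa$-models concerns the proper-class model $\LL[v]$; the paper bridges this by condensation: since $\nu$ is a strong limit, $\HHH{\nu}^{\LL[v]}\subseteq\LL_\nu[v]\subseteq N'$, which yields $\cof{\nu}^{\LL[v]}=\bar\delta$ and that $\langle\LL[v],\in,v\rangle$ is the $\bar\delta$-model. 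Second, and decisively: granting via {\cite[Theorem 20.12]{MR1994835}} that $\LL[v]=N_\tau$ is an iterate of $\langle\LL[U],\in,U\rangle$ with $j_{0,\tau}(\delta)=\bar\delta$ (where one uses $v\in\LL[U]$ to see the iteration goes in this direction), comparison by itself says nothing about how $\LL[v]$ computes $\cof{\nu}$. The paper's key idea, absent from your sketch, is a fixed-point construction: by {\cite[Corollary 19.7]{MR1994835}} one has $j_{0,\tau}(\mu)<\nu$ for all $\mu<\nu$, and since $\crit{j_{0,\tau}}=\delta$ the fixed points of $j_{0,\tau}$ below $\nu$ are closed under suprema of length ${<}\delta$ and unbounded in $\nu$; as $\cof{\nu}=\delta$, one can choose a continuous, strictly increasing, cofinal map $\map{c}{\delta}{\nu}$ through fixed points, so that $c=j_{0,\tau}(c)\restriction\delta\in\LL[v]$ and hence $\cof{\nu}^{\LL[v]}\leq\delta<\bar\delta$, contradicting the condensation claim. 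Without this construction your uniqueness argument does not close: a priori an iterable witness on some $\bar\delta>\delta$ could satisfy all your clauses, since $\bar\delta$ need not even be regular in $\VV$.

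Two smaller points. Your iterability clause (c) and the finite-fragment device from the core-model section are unnecessary in this one-measure setting: every model $\langle\LL[v],\in,v\rangle$ in which $v$ is a normal ultrafilter is automatically iterable by the Kunen--Silver theory, and accordingly the paper's formula $\Phi$ only demands that $M$ be a transitive $\ZFC^-$ model of the form $\LL_\alpha[F]$ with the stated first-order properties. And your opening observations about the $\nu$-th iterate ($\delta_\nu=\nu$, recovering $U$ from $j_{0\nu}$) play no role in the definability argument; the iteration that matters is the one of length $\tau\leq\bar\delta$ produced by the comparison theorem in the uniqueness step.
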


\begin{proof}
  Pick a $\Sigma_1$-formula $\Phi(v_0,\ldots,v_3)$ with the property that $\Phi(M,\varepsilon,F,\nu)$ is equivalent to the conjunction of the following statements: 
 \begin{enumerate}
  \item $M$ is a transitive model of $\ZFC^-$ with $\nu\in M$. 

  \item $\varepsilon<\nu$ is a measurable cardinal in $M$ with $\cof{\nu}^M=\varepsilon$. 

  \item $F\in M$ is a normal ultrafilter on $\varepsilon$ in $M$ and $M=\LL_\alpha[F]$ for some $\alpha\in\On$. 
 \end{enumerate}
 Our assumptions imply that $\Phi(L_{\nu^{++}}[U],\delta,U,\nu)$ holds.

  \begin{claim*}
  If $\Phi(M,\varepsilon,F,\nu)$ holds, then $\cof{\nu}^{\LL[F]}=\varepsilon$, $\varepsilon$ is a measurable cardinal in $L[F]$ and $F$ is a normal ultrafilter on $\varepsilon$ in $\LL[F]$.
 \end{claim*}
 
  \begin{proof}[Proof of the Claim]
  Since $\nu$ is a strong limit cardinal greater than $\varepsilon$ in $\VV$, the condensation principle for $\LL[F]$ implies that $\HHH{\nu}^{\LL[F]}\subseteq\LL_\nu[F]\subseteq M$. This shows that $\varepsilon$ is a measurable cardinal in $\LL[F]$ and $F$ is a normal ultrafilter on $\varepsilon$ in $\LL[F]$. Moreover, since $\varepsilon$ is regular in $\LL[F]$, we get $\varepsilon=\cof{\nu}^{\LL[F]}$, because otherwise $\cof{\nu}^{\LL[F]}<\varepsilon$ would imply that $\cof{\varepsilon}^{\LL[F]}<\varepsilon=\cof{\varepsilon}^M$ holds and this would imply that $\POT{\varepsilon}^M\neq\POT{\varepsilon}^{\LL[F]}$. 
 \end{proof}

  \begin{claim*}
  If $\Phi(M,\varepsilon,F,\nu)$ holds, then $\delta=\varepsilon$ and $F=U$. 
 \end{claim*}

 \begin{proof}[Proof of the Claim]
  Assume, towards a contradiction, that $\delta\neq\varepsilon$ holds. Since we have  $\delta=\cof{\nu}\leq\cof{\nu}^{\LL[F]}=\varepsilon$, we can conclude that $\delta<\varepsilon$. By our assumptions, $\langle\LL[U],\in,U\rangle$ is \emph{the $\delta$-model} (in the sense of {\cite[Section 20]{MR1994835}}) and the above claim shows that $\langle\LL[F],\in,F\rangle$ is the $\varepsilon$-model. 
    Then {\cite[Theorem 20.12]{MR1994835}} allows us to find a $0<\tau\leq\varepsilon$ with the property that, if $$\langle\seq{\langle N_\alpha,\in,F_\alpha\rangle}{\alpha\leq\tau},\seq{\map{j_{\alpha,\beta}}{N_\alpha}{N_\beta}}{\alpha\leq\beta\leq\tau}\rangle$$ denotes the corresponding system of iterated ultrapowers of $\langle\LL[U],\in,U\rangle$, then we have $N_\tau=\LL[F]$, $j_{0,\tau}(\delta)=\varepsilon$ and $F=F_\tau$. In this situation, we can use {\cite[Corollary 19.7]{MR1994835}} to see that $j_{0,\tau}(\mu)<\nu$ holds for every $\mu<\nu$. Since $\crit{j_{0,\tau}}=\delta$, this allows us to define a continuous, cofinal and strictly increasing map $\map{c}{\delta}{\nu}$ with the property that $j_{0,\tau}(c(\gamma))=c(\gamma)$ holds for every $\gamma<\delta$. But then we have $c=j_{0,\tau}(c)\restriction\delta\in\LL[F]$ and therefore $\cof{\nu}^{\LL[F]}\leq\delta<\varepsilon$, contradicting the above claim. 
  
  The above computations show that $\delta=\varepsilon$. Using {\cite[Theorem 20.10]{MR1994835}} and the above claim, we can conclude that we also have $F=U$. 
\end{proof}

The last claim shows that $U$ is the unique set $F$ such that there are $M$ and $\varepsilon$ with the property that $\Phi(M,\varepsilon,F,\nu)$ holds. This shows that the set $\{U\}$ is definable by a $\Sigma_1$-formula with parameter $\nu$. 
\end{proof}

In the following lemma, we prove the analog of the above result for successors of singular cardinals of cofinality equal to the unique measurable cardinal in $\LL[U]$.

\begin{lemma}\label{lemma:GoodWOSuccSingular}
  In the setting of Theorem \ref{theorem:Main}, if $\mu\geq\delta$ is a cardinal with $\cof{\mu}=\delta$, then the set $\{U\}$ is $\Sigma_1(\mu^+)$-definable. 
\end{lemma}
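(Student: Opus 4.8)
The plan is to adapt the $\Sigma_1$-formula $\Phi$ used in the proof of Lemma~\ref{lemma:GoodWOSingular} so that, instead of taking the singular cardinal of cofinality $\delta$ as the parameter directly, it reads off the \emph{predecessor} cardinal $\mu$ from the parameter $\mu^+$ inside a sufficiently tall initial segment of some $\LL[F]$, and then reuses the Kunen--Silver analysis from that lemma to pin down $F=U$. It is harmless to let the defining formula depend on which case $\mu$ falls into, so I would split into $\mu>\delta$ (the substantial case) and $\mu=\delta$ (the parameter $\delta^+$, treated more easily below).

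For the case $\mu>\delta$, I would fix a $\Sigma_1$-formula $\Psi$ so that $\Psi(M,\varepsilon,\mu,F,\mu^+)$ asserts that $M$ is a transitive model of $\ZFC^-$ with $\mu^+\in M$, that $M=\LL_\alpha[F]$ for some $\alpha$, and that inside $M$ the ordinal $\mu^+$ is the cardinal successor of $\mu$, $\varepsilon<\mu$ is a measurable cardinal with $\cof{\mu}^M=\varepsilon$, and $F$ is a normal ultrafilter on $\varepsilon$. Exactly as in Lemma~\ref{lemma:GoodWOSingular} this is a $\Sigma_1(\mu^+)$-condition, since ``$M=\LL_\alpha[F]$'' is $\Sigma_1$ and for such $M$ the satisfaction relation is $\Delta_1$. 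The intended witness is $M=\LL_{\mu^{++}}[U]=\HHH{\mu^{++}}$ with $\varepsilon=\delta$ and $F=U$: this $M$ contains $\HHH{\mu^+}$, hence computes both the cardinal successor of $\mu$ and $\cof{\mu}=\delta$ correctly and sees that $\delta$ is measurable with normal ultrafilter $U$.

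The decisive point is uniqueness, namely that $\Psi(M,\varepsilon,\mu,F,\mu^+)$ forces $F=U$. First I would verify that the recovered $\mu$ is the genuine predecessor: as $\mu^+$ is a real cardinal and $M\models$\,``$\mu^+$ is the successor of $\mu$'', every $\gamma\in[\mu,\mu^+)$ carries a surjection from $\mu$ lying in $M\subseteq\VV$, so $\betrag{\gamma}^\VV\le\betrag{\mu}^\VV$; were $\mu$ not a cardinal this would give $\mu^+\le(\betrag{\mu}^\VV)^+\le\mu$, a contradiction, and the same computation yields $\mu^+=(\mu^+)^\VV$. Hence $\mu$ is precisely the cardinal of the statement, so $\cof{\mu}=\delta$ and, being singular under the $\GCH$, $\mu$ is a strong limit cardinal. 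Condensation for $\LL[F]$ then gives $\HHH{\mu}^{\LL[F]}=\LL_\mu[F]\subseteq M$, from which the first claim of Lemma~\ref{lemma:GoodWOSingular} shows that $\varepsilon$ is measurable in $\LL[F]$, that $F$ is a normal ultrafilter on $\varepsilon$ there, and that $\cof{\mu}^{\LL[F]}=\varepsilon$; since $\LL[F]\subseteq\VV$ this forces $\delta=\cof{\mu}\le\varepsilon$.

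It then remains to exclude $\delta<\varepsilon$, and here the clause $\varepsilon<\mu$ together with $\mu$ being a strong limit is exactly what lets the iterated-ultrapower comparison of the second claim of Lemma~\ref{lemma:GoodWOSingular} run verbatim with $\nu$ replaced by $\mu$: realizing $\LL[F]$ as an iterate of the $\delta$-model $\langle\LL[U],\in,U\rangle$ with $j_{0,\tau}(\delta)=\varepsilon$ and using {\cite[Corollary 19.7]{MR1994835}} to keep images of ordinals below $\mu$ below $\mu$, one builds a continuous cofinal $\map{c}{\delta}{\mu}$ fixed by the iteration, whence $c\in\LL[F]$ and $\cof{\mu}^{\LL[F]}\le\delta<\varepsilon$, contradicting $\cof{\mu}^{\LL[F]}=\varepsilon$. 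Thus $\varepsilon=\delta$, and {\cite[Theorem 20.10]{MR1994835}} identifies $\LL[F]$ with the $\delta$-model $\LL[U]$ and forces $F=U$. The case $\mu=\delta$ is easier and I would phrase the formula with $\varepsilon=\mu$ required to be measurable in $M$ instead; there condensation makes $\delta$ regular in $\LL[F]$, so $\varepsilon=\cof{\delta}^{\LL[F]}=\delta$ at once and $F=U$ again by uniqueness of the $\delta$-model, with no comparison needed. The hard part, and the reason the restriction $\varepsilon<\mu$ cannot be dropped for $\mu>\delta$, is the exclusion of spurious witnesses: singular fixed points of the critical sequence have cofinality $\delta$ and are genuinely measurable in nontrivial iterates $\LL[F]$ with $F\neq U$, so without forbidding $\varepsilon=\mu$ such an $\LL[F]$ would also satisfy the formula — precisely the role played by the restriction $\varepsilon<\nu$ in Lemma~\ref{lemma:GoodWOSingular}.
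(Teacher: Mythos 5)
There is a genuine gap at the decisive step of your argument: the identification of the recovered predecessor with the true cardinal $\mu$. From $M\models\anf{\kappa=(\mu')^{+}}$ and the fact that the parameter $\kappa=\mu^+$ is a genuine cardinal, your surjection argument only yields that every $\gamma\in[\mu',\kappa)$ has $\VV$-cardinality at most $\betrag{\mu'}^{\VV}$, hence $\kappa=(\betrag{\mu'}^{\VV})^+$ and $\betrag{\mu'}^{\VV}=\mu$. The further inequality $(\betrag{\mu'}^{\VV})^+\leq\mu'$ that you invoke to get a contradiction is false for every ordinal $\mu'$ strictly between $\mu$ and $\mu^+$: all such ordinals satisfy $\betrag{\mu'}^{\VV}=\mu<\mu'<(\betrag{\mu'}^{\VV})^+=\kappa$. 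So nothing excludes that $\mu'$ is a non-cardinal which $M=\LL_\alpha[F]$ wrongly believes to be a cardinal, and this possibility is real, not hypothetical: the paper's own proof considers the iterated ultrapowers $\langle N_\gamma,\in,F_\gamma\rangle$ of $\langle\VV,\in,U\rangle$ of length $\kappa$, and for $\mu<\gamma<\kappa$ the ordinal $\mu_\gamma=j_{1,\gamma}(\mu)$ satisfies $\mu<\gamma\leq j_{0,\gamma}(\delta)<\mu_\gamma<\kappa$ while being a cardinal of $N_\gamma$ with $(\mu_\gamma^+)^{N_\gamma}=\kappa$; no level $\LL_\alpha[F_\gamma]$ can see the collapse of $\mu_\gamma$. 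Consequently everything you build on the identification collapses: $\mu'$ need not be a strong limit or have cofinality $\delta$ in $\VV$, condensation at $\mu'$ gives nothing, and the verbatim rerun of the comparison argument of Lemma \ref{lemma:GoodWOSingular} (the {\cite[Corollary 19.7]{MR1994835}} bounds and the continuous cofinal $\map{c}{\delta}{\mu}$ fixed by the iteration) is unavailable.

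Your formula itself is essentially the paper's, so the statement of uniqueness is fine; what is missing is the mechanism that proves it, which in the paper is a new ingredient with no analogue in Lemma \ref{lemma:GoodWOSingular}: an induction along the length-$\kappa$ iteration showing $\cof{\mu_\gamma}^{N_\gamma}=\delta$ and $\kappa=(\mu_\gamma^+)^{N_\gamma}=j_{1,\gamma}(\kappa)$ for all $0<\gamma<\kappa$. Given a witness with $F\neq U$, the comparison theorem realizes $\LL[F]=N_\gamma$ with $\varepsilon=j_{0,\gamma}(\delta)$ for some $0<\gamma\leq\varepsilon<\kappa$; since your $\mu'$ and $\mu_\gamma$ are both cardinals of $N_\gamma$ whose $N_\gamma$-successor is $\kappa$, they coincide, whence $\varepsilon=\cof{\mu'}^{N_\gamma}=\cof{\mu_\gamma}^{N_\gamma}=\delta$, a contradiction. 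It is this matching of cofinalities of the unique $\LL[F]$-predecessor of $\kappa$ — not a fixed-point argument at $\mu$ — that forces $\varepsilon=\delta$ and then $F=U$ via the uniqueness of the $\delta$-model. (Your side case $\mu=\delta$ has the same flaw, since the recovered $\varepsilon$ could a priori be an ordinal in $(\delta,\delta^+)$ that $M$ deems measurable; there it can be repaired more cheaply, for instance because the first critical point $j_{0,1}(\delta)$ of the iteration already exceeds $\delta^+$, but your sketch does not supply this.)
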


\begin{proof}
   Set $\kappa=\mu^+$ and let $$\langle\seq{\langle N_\gamma,\in,F_\gamma\rangle}{\gamma<\kappa},\seq{\map{j_{\beta,\gamma}}{N_\beta}{N_\gamma}}{\beta\leq\gamma<\kappa}\rangle$$ denote the corresponding system of iterated ultrapowers of $\langle\VV,\in,U\rangle$. We define $\delta_\gamma=j_{0,\gamma}(\delta)$ and $\mu_\gamma=j_{1,\gamma}(\mu)$ for all $0<\gamma<\kappa$. Since $\mu$ is a strong limit cardinal greater than $\delta$, we have $\delta_1<\mu$ and this implies that $\delta_\gamma<\mu_\gamma$ for all $0<\gamma<\kappa$.

 \begin{claim*}
  If $0<\gamma<\kappa$, then $\cof{\mu_\gamma}^{N_\gamma}=\delta$ and $\kappa=(\mu_\gamma^+)^{N_\gamma}=j_{1,\gamma}(\kappa)$. 
 \end{claim*}

 \begin{proof}[Proof of the Claim]
  We prove the claim by induction on $0<\gamma<\kappa$. 

  Since $N_1=\Ult{\VV}{U}$, we have ${}^\delta N_1\subseteq N_1$, $\cof{\mu}^{N_1}=\delta$ and $$\kappa ~ = ~ \mu^+ ~ = ~ \mu^\delta ~ \leq ~ (\mu^\delta)^{N_1} ~ = ~ (\mu^+)^{N_1} ~ \leq ~ \mu^+ ~ = ~ \kappa.$$

  Next, assume that $\gamma=\bar{\gamma}+1$. Then $N_\gamma=\Ult{N_{\bar{\gamma}}}{F_{\bar{\gamma}}}$ and $\map{j_{\bar{\gamma},\gamma}}{N_{\bar{\gamma}}}{N_\gamma}$ is the corresponding ultrapower embedding in $N_{\bar{\gamma}}$.  Our induction hypothesis implies that $\mu_{\bar{\gamma}}>\delta_{\bar{\gamma}}$ is a strong limit cardinal of cofinality $\delta$ in $N_{\bar{\gamma}}$. Then $j_{\bar{\gamma},\gamma}(\alpha)<\mu_{\bar{\gamma}}$ for all $\alpha<\mu_{\bar{\gamma}}$ and there is a cofinal function $\map{c}{\delta}{\mu_{\bar{\gamma}}}$ in $N_{\bar{\gamma}}$ with the property that  $j_{\bar{\gamma},\gamma}(c(\beta))=c(\beta)$ holds for all $\beta<\delta$. Since $\delta<\delta_{\bar{\gamma}}=\crit{j_{\bar{\gamma},\gamma}}$, we have $j_{\bar{\gamma},\gamma}(c)=c$ and this implies that $$\mu_\gamma ~ = ~ j_{1,\gamma}(\mu) ~ = ~ j_{\bar{\gamma},\gamma}(\mu_{\bar{\gamma}}) ~ = ~ \mu_{\bar{\gamma}} ~ < ~ \kappa$$ and $\cof{\mu_\gamma}^{N_\gamma}=j_{1,\gamma}(\delta)=\delta$. Finally, we have $$\kappa ~ \geq ~ (\mu_\gamma^+)^{N_\gamma} ~ = ~ j_{\bar{\gamma},\gamma}\left((\mu^+_{\bar{\gamma}})^{N_{\bar{\gamma}}}\right) ~ = ~ j_{\bar{\gamma},\gamma}(\kappa) ~ \geq ~ \kappa.$$

 Now, assume that $\gamma\in\Lim\cap\kappa$. Since $\delta<\delta_1=\crit{j_{1,\gamma}}$,  elementarity and our induction hypothesis imply that $$\cof{\mu_\gamma}^{N_\gamma} ~ = ~ j_{1,\gamma}\left(\cof{\mu}^{N_1}\right) ~ = ~ j_{1,\gamma}(\delta) ~ = ~ \delta.$$ If $\beta<\mu_\gamma=j_{1,\gamma}(\mu)$, then we can find $\gamma_\beta<\gamma$ and $\alpha_\beta<\mu_{\gamma_\beta}$ with $\beta=j_{\gamma_\beta,\gamma}(\alpha_\beta)$. This yields an injection of $\mu_\gamma$ into $\gamma\cdot\sup_{\bar{\gamma}<\gamma}\mu_{\bar{\gamma}}$. Since our induction hypothesis implies that this product of ordinals is smaller than $\kappa$ and $\kappa=(\mu^+)^{N_1}$, we can conclude that $\mu_\gamma<\kappa$ and  this implies that $$\kappa ~ \geq ~ (\mu_\gamma^+)^{N_\gamma} ~ = ~ j_{1,\gamma}\left((\mu^+)^{N_1}\right) ~ = ~ j_{1,\gamma}(\kappa) ~ \geq ~ \kappa $$  holds. 
 \end{proof}

  Pick a $\Sigma_1$-formula $\Psi(v_0,\ldots,v_4)$ with the property that $\Psi(M,\varepsilon,F,\nu,\kappa)$ is equivalent to the conjunction of the following statements: 
 \begin{enumerate}
  \item $M$ is a transitive model of $\ZFC^-$ with $\kappa\in M$. 

  \item $\varepsilon<\kappa$ is a measurable cardinal in $M$. 

  \item $F\in M$ is a normal ultrafilter on $\varepsilon$ in $M$ and $M=\LL_\alpha[F]$ for some $\alpha\in\On$. 
  
  \item $\nu$ is a cardinal in $M$ with $\varepsilon<\nu<\kappa$, $\varepsilon=\cof{\nu}^M$ and $\kappa=(\nu^+)^M$. 
 \end{enumerate}
 Note that our assumptions imply that $\Psi(L_{\kappa^{++}}[U],\delta,U,\mu,\kappa)$ holds.

   \begin{claim*}
  If $\Psi(M,\varepsilon,F,\nu,\kappa)$ holds, then $\nu$ is a cardinal in $\LL[F]$, $\kappa=(\nu^+)^{\LL[F]}$, $\cof{\nu}^{\LL[F]}=\varepsilon$, $\varepsilon$ is an inaccessible cardinal in $L[F]$ and $F$ is a normal ultrafilter on $\varepsilon$ in $\LL[F]$.
 \end{claim*}
 
 \begin{proof}[Proof of the Claim]
  Since $\kappa$ is a regular cardinal greater than $\nu$ in $\VV$, we can use the condensation principle for $\LL[F]$ to show that $\POT{\nu}^{\LL[F]}\subseteq\LL_\kappa[F]\subseteq M$. Together with our assumptions, this observation implies the above conclusions. 
 \end{proof}

 \begin{claim*}
  If $\Psi(M,\varepsilon,F,\nu,\kappa)$ holds, then $\delta=\varepsilon$ and $F=U$.  
 \end{claim*}
 
 \begin{proof}[Proof of the Claim]
  First, assume that $\delta\neq\varepsilon$. Since the above claim shows that $\langle\LL[F],\in,F\rangle$ is the $\varepsilon$-model, {\cite[Theorem 20.12]{MR1994835}} shows that either $\langle\LL[F],\in,F\rangle$ is an iterate of $\langle\LL[U],\in,U\rangle$ or $\langle\LL[U],\in,U\rangle$ is an iterate of $\langle\LL[F],\in,F\rangle$. But we also know that  $F$ is an element of $\LL[U]$ and this implies that the first option holds.  Hence our assumption yields $0<\gamma\leq\varepsilon<\kappa$ with $\LL[F]=N_\gamma$, $\varepsilon=\delta_\gamma$ and $F=F_\gamma$. In this situation, the above claims imply that $\mu_\gamma$ is a cardinal of cofinality $\delta$ in $\LL[F]$, $\nu$ is a cardinal of cofinality $\varepsilon$ in $\LL[F]$ and  $(\nu^+)^{\LL[F]}=\kappa=(\mu_\gamma^+)^{\LL[F]}$ holds. But this implies that $\mu_\gamma=\nu$ and hence $\delta=\varepsilon$, a contradiction. 
 
   Since these computations show that $\delta=\varepsilon$, a combination of the last claim and {\cite[Theorem 20.10]{MR1994835}} allows us to conclude that $F=U$. 
 \end{proof}

 Using the above claim, we see that $U$ is the unique set $F$ with the property that $\Psi(M,\varepsilon,F,\nu,\kappa)$ holds for some $M$, $\varepsilon$ and $\nu$. In particular, it is possible to define the set $\{U\}$ using a $\Sigma_1$-formula with parameter $\kappa$.  
\end{proof}

\begin{proof}[Proof of Theorem \ref{theorem:Main}]
 Assume that $\delta$ is a measurable cardinal and $U$ is a normal ultrafilter on $\delta$ such that $\VV=\LL[U]$ holds and $\kappa$ is an infinite cardinal.

 First, assume that there is a good $\Sigma_1(\kappa)$-wellordering of $\POT{\kappa}$. Then Lemma \ref{lemma:MeasurableWOReals} implies that $\kappa$ is uncountable. Since the $\GCH$ holds in $\LL[U]$, we can use Corollary \ref{corollary:SimplificationGCHNonMeasurable} to conclude that either $\omega<\kappa<\delta$ or there is a cardinal $\nu$ with $\cof{\nu}=\delta$ and $\nu\leq\kappa\leq\nu^+$.

 In the other direction, if $\omega<\kappa<\delta$, then Corollary  \ref{corollary:GoodWObelowMeasurable} directly implies that there is a good $\Sigma_1(\kappa)$-wellordering of $\POT{\kappa}$.   Finally, assume that there is a cardinal $\nu$ with $\cof{\nu}=\delta$ and $\nu\leq\kappa\leq\nu^+$. Then Lemma \ref{lemma:GoodWOSingular} and Lemma \ref{lemma:GoodWOSuccSingular} show that the set $\{U\}$ is $\Sigma_1(\kappa)$-definable. Since the restriction of the canonical wellordering of $\LL[U]$ to $\POT{\kappa}$ is a good $\Sigma_1(\kappa,U)$-wellordering of $\POT{\kappa}$, we can conclude that there is a good $\Sigma_1(\kappa)$-wellordering of $\POT{\kappa}$. 
\end{proof}

The above arguments also allow us to prove the following non-definability result for predecessors.

\begin{corollary}
 The following statements hold in the setting of Theorem \ref{theorem:Main}: 
 \begin{enumerate}
  \item If $\nu$ is a cardinal with $\cof{\nu}=\delta$, then the set $\{\delta\}$ is both $\Sigma_1(\nu)$ and $\Sigma_1(\nu^+)$-definable. 
  
  \item If $\mu\leq\nu$ are cardinals with $\cof{\mu}=\cof{\nu}=\delta$, then the sets $\{\mu\}$ and $\{\mu^+\}$ are not $\Sigma_1(\nu^{++})$-definable. 
 \end{enumerate}
\end{corollary}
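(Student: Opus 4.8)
For part (i), I would reduce everything to the two preceding lemmas. Since $\delta=\bigcup U$, the singleton $\{\delta\}$ is $\Sigma_1$-definable (indeed $\Delta_0$-definable) from the parameter $U$, so $\{\delta\}$ is $\Sigma_1(z)$-definable whenever $\{U\}$ is. Now Lemma \ref{lemma:GoodWOSingular} shows that $\{U\}$ is $\Sigma_1(\nu)$-definable when $\nu>\delta$, and Lemma \ref{lemma:GoodWOSuccSingular}, applied with $\mu=\nu$, shows that $\{U\}$ is $\Sigma_1(\nu^+)$-definable; composing each with the definition of $\delta$ from $U$ yields both assertions. In the degenerate case $\nu=\delta$ the set $\{\delta\}=\{\nu\}$ is trivially $\Sigma_1(\nu)$-definable, while Lemma \ref{lemma:GoodWOSuccSingular} with $\mu=\delta$ handles the $\Sigma_1(\delta^+)$-definability.

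For part (ii) the plan is to exhibit an elementary embedding that fixes the parameter $\nu^{++}$ while moving $\mu$ and $\mu^+$, and then to conclude by $\Sigma_1$-upwards absoluteness exactly as in the proof of Lemma \ref{lemma:MeasurableWOabove}. I would use the single ultrapower $\map{j_U}{\VV}{N=\Ult{\VV}{U}}$. Because the $\GCH$ holds in $\LL[U]$ and $\cof{\nu}=\delta$, a short cardinal-arithmetic computation shows that $\nu^{++}$ is $\delta$-inaccessible with $\cof{\nu^{++}}=\nu^{++}>\delta$, so the argument proving the first claim in Lemma \ref{lemma:MeasurableWOabove} applies to $\nu^{++}$ (whose cofinality exceeds $\delta$) and gives $j_U(\nu^{++})=\nu^{++}$. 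Since $\cof{\mu}=\delta=\crit{j_U}$, the ultrapower is discontinuous at $\mu$, whence $j_U(\mu)>\mu$. Moreover $j_U(\mu)$ is a cardinal of $N$ lying strictly above $\mu$, and the identity $(\mu^+)^N=\mu^+$ established inside the proof of Lemma \ref{lemma:GoodWOSuccSingular} shows that $\mu^+$ is the successor cardinal of $\mu$ in $N$; consequently $j_U(\mu)\geq\mu^+$, and therefore $j_U(\mu^+)>j_U(\mu)\geq\mu^+$.

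Granting these facts, the nondefinability is a routine absoluteness argument. Assuming toward a contradiction that $\{\mu\}=\{x\mid\psi(x,\nu^{++})\}$ for some $\Sigma_1$-formula $\psi$, elementarity of $j_U$ together with $j_U(\nu^{++})=\nu^{++}$ gives $N\models\psi(j_U(\mu),\nu^{++})$; as $N\subseteq\VV$ is an inner model and $\psi$ is $\Sigma_1$, upwards absoluteness yields $\VV\models\psi(j_U(\mu),\nu^{++})$, and uniqueness of the witness forces $j_U(\mu)=\mu$, contradicting $j_U(\mu)>\mu$. Running the identical argument with $\mu^+$ in place of $\mu$, and using $j_U(\mu^+)>\mu^+$, shows that $\{\mu^+\}$ is not $\Sigma_1(\nu^{++})$-definable either.

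I expect the one genuinely delicate step to be the verification that $j_U$ moves $\mu^+$, which is precisely where one must pin down the value of the successor of $\mu$ as computed in $N$; the remaining ingredients are the cofinality bookkeeping already carried out in the proofs of Lemma \ref{lemma:MeasurableWOabove} and Lemma \ref{lemma:GoodWOSuccSingular}. Finally, I would note that the hypothesis $\mu\leq\nu$ is not actually used: the embedding $j_U$ moves every cardinal of cofinality $\delta$ while fixing $\nu^{++}$, so this restriction merely isolates the case relevant to the preceding results.
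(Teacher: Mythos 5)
Your proposal is correct, and while part (i) coincides with the paper's proof (the paper likewise just combines Lemma \ref{lemma:GoodWOSingular} and Lemma \ref{lemma:GoodWOSuccSingular} with the fact that $\delta$ is definable from $U$; your explicit treatment of the degenerate case $\nu=\delta$, where Lemma \ref{lemma:GoodWOSingular} does not apply, is a sensible refinement), your part (ii) takes a genuinely different route. The paper argues through wellorderings: if $\{\mu\}$ or $\{\mu^+\}$ were $\Sigma_1(\nu^{++})$-definable, then composing with Lemma \ref{lemma:GoodWOSingular} or Lemma \ref{lemma:GoodWOSuccSingular} would make $\{U\}$ $\Sigma_1(\nu^{++})$-definable, so the restriction of the canonical wellordering of $\LL[U]$ to $\POT{\nu^{++}}$ would be a good $\Sigma_1(\nu^{++})$-wellordering, contradicting Corollary \ref{corollary:CharacterizeAllGoodWOinLU} (whose relevant instance ultimately rests on Lemma \ref{lemma:MeasurableWOabove}, since under the $\GCH$ the cardinal $\nu^{++}$ is $\delta$-inaccessible of cofinality $\neq\delta$). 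You instead inline the embedding argument underlying Lemma \ref{lemma:MeasurableWOabove} and run it directly on the singletons: $j_U$ fixes $\nu^{++}$ (your $\delta$-inaccessibility computation for $\nu^{++}$ is correct), moves $\mu$ by discontinuity at cofinality $\delta$, and moves $\mu^+$ via the identity $(\mu^+)^{\Ult{\VV}{U}}=\mu^+$ from the first claim in the proof of Lemma \ref{lemma:GoodWOSuccSingular}, after which elementarity plus $\Sigma_1$-upward absoluteness from the transitive inner model $\Ult{\VV}{U}$ gives the contradiction; all these steps check out. Your approach buys self-containedness — it needs neither of the two definability lemmas nor the wellordering characterization for part (ii) — and, as you correctly observe, it proves more: the hypothesis $\mu\leq\nu$ is never used, and in fact $\{\mu\}$ and $\{\mu^+\}$ fail to be $\Sigma_1(\lambda)$-definable from any parameter $\lambda$ fixed by $j_U$. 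The paper's approach, conversely, is a two-line deduction given its preceding results and makes visible the tight equivalence in $\LL[U]$ between $\Sigma_1$-definability of the singletons $\{\mu\}$, $\{\mu^+\}$, $\{U\}$ and the existence of good $\Sigma_1$-wellorders.
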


\begin{proof}
 (i) This statement follows directly from Lemma \ref{lemma:GoodWOSingular} and Lemma \ref{lemma:GoodWOSuccSingular}. 
 
 (ii) Assume, towards a contradiction, that there are cardinals $\mu\leq\nu$ with the property that $\cof{\mu}=\cof{\nu}=\delta$ and either the set $\{\mu\}$ or the set $\{\mu^+\}$ is $\Sigma_1(\nu^{++})$-definable. Then the above lemmas show that the set $\{U\}$ is $\Sigma_1(\nu^{++})$-definable and this implies that the restriction of the canonical wellordering of $\LL[U]$ to $\POT{\nu^{++}}$ is a good $\Sigma_1(\nu^{++})$-wellordering. This contradicts Corollary \ref{corollary:CharacterizeAllGoodWOinLU}. 
\end{proof}


\section{Models of the form $\LL[U_0,U_1]$}\label{section:TwoMeasurables}

In this section, we study the provable restrictions that the existence of two measurable cardinals imposes on the existence of good $\Sigma_1(\kappa)$-wellorders. Throughout this section, we work in the setting of Theorem \ref{theorem:TwoMeasurables}: \emph{there are measurable cardinals $\delta_0<\delta_1$ and $\VV=\LL[U_0,U_1]$ holds, where $U_0$ is a normal ultrafilter on $\delta_0$ and $U_1$ is a normal ultrafilter in $\delta_1$}.

We start by proving the analog of Lemma \ref{lemma:GoodWOSingular} for two measurable cardinals.

\begin{lemma}\label{lemma:TwoMeasurablesSingluarCard}
 In the setting of Theorem \ref{theorem:TwoMeasurables}, if $\delta_0<\nu<\delta_1$ is a cardinal with $\cof{\nu}=\delta_0$, then the set $\{U_0\}$ is $\Sigma_1(\nu)$-definable. 
\end{lemma}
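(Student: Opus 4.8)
The plan is to follow the proof of Lemma~\ref{lemma:GoodWOSingular} as closely as possible, the one essential new feature being that the witnessing model now has to carry \emph{both} measures. The reason a one-measure witness $\LL_\alpha[F]$ fails is that, since $\delta_1>\delta_0$ is measurable, the sharp $U_0^\#$ exists, so $\POT{\delta_0}^{\LL[U_0]}\subsetneq\POT{\delta_0}^{\VV}$ and hence $U_0\notin\LL[U_0]$; a model of the form $\LL_\alpha[F]$ could therefore only recover $U_0\cap\LL[U_0]$, not $U_0$ itself. Accordingly, I would fix a $\Sigma_1$-formula $\Phi(M,\varepsilon_0,\varepsilon_1,F_0,F_1,\nu)$ expressing that $M$ is a transitive model of $\ZFC^-$ with $\nu\in M$; that $\varepsilon_0<\varepsilon_1$ are the only measurable cardinals of $M$, with $F_i$ a normal ultrafilter on $\varepsilon_i$ in $M$; that $M=\LL_\alpha[F_0,F_1]$ for some $\alpha$; and that $\varepsilon_0<\nu<\varepsilon_1$ and $\cof{\nu}^M=\varepsilon_0$. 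Choosing $\alpha>\delta_1$ large enough that $\LL_\alpha[U_0,U_1]$ models $\ZFC^-$ and contains a cofinal map witnessing $\cof{\nu}=\delta_0$, and noting $\POT{\delta_0}^{\VV}\subseteq\LL_{\delta_0^{++}}[U_0,U_1]\subseteq\LL_\alpha[U_0,U_1]$ (so that $U_0=U_0\cap M\in M$), one checks that $\Phi(\LL_\alpha[U_0,U_1],\delta_0,\delta_1,U_0,U_1,\nu)$ holds; thus a witness exists with $F_0=U_0$.

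Next comes the condensation step, analogous to the first two claims of Lemma~\ref{lemma:GoodWOSingular}. Since $\cof{\nu}=\delta_0>\omega$ forces $\nu$ to be singular, hence (by the $\GCH$ in $\LL[U_0,U_1]$, see \cite{MR0344123}) a strong limit cardinal with $\delta_0<\nu<\delta_1$, the condensation principle for Koepke's short core models \cite{MR926749} gives $\HHH{\nu}^{\LL[F_0,F_1]}\subseteq\LL_\nu[F_0,F_1]\subseteq M$. This yields that $\varepsilon_0$ is measurable and $F_0$ a normal ultrafilter on $\varepsilon_0$ in $\LL[F_0,F_1]$, that $\POT{\varepsilon_0}^{\LL[F_0,F_1]}=\POT{\varepsilon_0}^{M}$, and---exactly as in the first claim of Lemma~\ref{lemma:GoodWOSingular}, using regularity of $\varepsilon_0$---that $\cof{\nu}^{\LL[F_0,F_1]}=\varepsilon_0$. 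In particular, since $F_0,F_1\in\VV$ gives $\LL[F_0,F_1]\subseteq\VV$, we obtain $\delta_0=\cof{\nu}^{\VV}\leq\cof{\nu}^{\LL[F_0,F_1]}=\varepsilon_0$.

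The heart of the argument, corresponding to the second claim of Lemma~\ref{lemma:GoodWOSingular}, is to show that every witness satisfies $\varepsilon_0=\delta_0$ and $F_0=U_0$; here I would replace Kunen's iteration analysis of $\LL[U]$ by Koepke's comparison theory for short core models \cite{MR926749}, which provides iterations of $\langle\LL[F_0,F_1],\in,F_0,F_1\rangle$ and of $\VV=\langle\LL[U_0,U_1],\in,U_0,U_1\rangle$ to a common model. The crucial simplification is that $\nu$ lies strictly below the upper measurable in both structures ($\nu<\varepsilon_1$ and $\nu<\delta_1$), so every application of an upper measure has critical point above $\nu$ and therefore fixes $\HHH{\nu}$ and $\cof{\nu}$. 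Thus only the lower measures matter below $\nu$, and the comparison reduces, below $\delta_1$, to a single-measure iteration to which the fixed-point estimate of Lemma~\ref{lemma:GoodWOSingular} applies: if the lower measurable of $\VV$ were moved, i.e. if $\delta_0<\varepsilon_0$, then the lower iteration would fix a closed cofinal subset of the strong limit $\nu$ and place a cofinal map $\map{c}{\delta_0}{\nu}$ into the iterate, forcing $\cof{\nu}^{\LL[F_0,F_1]}\leq\delta_0<\varepsilon_0$ and contradicting the previous paragraph. Hence $\varepsilon_0=\delta_0$, and because the comparison uses only the upper measures below $\delta_1$ it leaves $\POT{\delta_0}$ untouched, giving $\POT{\delta_0}^{\LL[F_0,F_1]}=\POT{\delta_0}^{\VV}$ and therefore $F_0=U_0$. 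Consequently $U_0$ is the unique set $F_0$ for which $\Phi(M,\varepsilon_0,\varepsilon_1,F_0,F_1,\nu)$ holds for some $M,\varepsilon_0,\varepsilon_1,F_1$, exhibiting $\{U_0\}$ as $\Sigma_1(\nu)$-definable.

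I expect the main obstacle to be precisely this comparison step: one must verify that Koepke's coiteration of two-measure mice, once restricted below the upper measurable, genuinely collapses to the single-measure iteration picture (so that the $\mathrm{L}[U]$-style fixed-point estimate applies to the lower measure) and that it fixes $\POT{\delta_0}$, since that is exactly what upgrades the agreement of the lower measures from $U_0\cap\LL[U_0]$ to the full ultrafilter $U_0$.
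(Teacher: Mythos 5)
Your proposal follows the same skeleton as the paper's proof: a $\Sigma_1$-formula asserting the existence of a witnessing model that contains $\nu$, a lower measure $F_0$ on some $\varepsilon_0<\nu$ and an upper measure above $\nu$; a condensation-type claim forcing $\cof{\nu}$ to be computed correctly in the relevant inner model; and the fixed-point argument (a continuous cofinal $\map{c}{\delta_0}{\nu}$ fixed pointwise by the iteration, so $c=j(c)\restriction\delta_0$ lands in the iterate) to rule out $\delta_0<\varepsilon_0$ --- this last step is essentially verbatim the paper's, and your diagnosis of why one-measure witnesses $\LL_\alpha[F]$ fail is also sound. The genuine gap lies in your choice of witness. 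The paper does \emph{not} use plain models $\LL_\alpha[F_0,F_1]$: it takes the witness $M$ to be a $(\{\varepsilon_0\}\times F_0)$-mouse in Koepke's sense with $F_M=\{\varepsilon_1\}\times F_1$, so the lower measure is the \emph{predicate} over which the mouse is built and only the upper measure is the mouse's measure sequence. The point of this design is that mousehood includes iterability, and by {\cite[Theorem 2.7]{MR926749}} the statement \anf{$M$ is a $D$-mouse} is absolute to transitive models of a finite fragment of $\ZFC$, hence $\Sigma_1$-expressible; this certificate is what licenses every subsequent comparison.

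Your version lacks any such certificate, and three steps fail as written. (a) Koepke's comparison theorems apply to mice and to core models $\KK[D]$ (under hypotheses checked via {\cite[Theorem 2.14]{MR926749}}), not to an arbitrary internal $\LL_\alpha[F_0,F_1]$; in the one-measure Lemma \ref{lemma:GoodWOSingular} the analogous gap is closed by Kunen's uniqueness theorem for the $\varepsilon$-model, which applies to any $\langle\LL[F],\in,F\rangle$ with internal normality, and no two-measure analogue is cited or proved in your sketch --- so your ``comparison step'' has no theorem to stand on. (b) The containment $\HHH{\nu}^{\LL[F_0,F_1]}\subseteq\LL_\nu[F_0,F_1]$ does not follow from condensation: G\"odel condensation for relative constructibility collapses the predicates $F_0,F_1$ to possibly different ones, which is exactly why the paper proves its analogue $\HHH{\nu}^{\KK[D]}\subseteq\betrag{M}$ by coiterating $M$ against small $D$-mice ({\cite[Theorem 2.12]{MR926749}}) and using preservation of lower parts ({\cite[Lemma 2.4]{MR926749}}), with the ordinal-height comparison $\alpha(N_*)<\alpha(M_*)$ extracted from the cofinal function in $\lp{M}$ versus the regularity of $\nu$ in $N_*$. (c) Your final step --- ``the comparison uses only the upper measures below $\delta_1$, hence fixes $\POT{\delta_0}$, hence $F_0=U_0$'' --- assumes what is to be proved: if $F_0\neq U_0$, the first disagreement in any coiteration occurs precisely at the lower measures, so nothing guarantees they are not applied. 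The paper instead compares the \emph{one-measure} core models $\langle\KK[\{\delta_0\}\times U_0],\in,U_0\rangle$ and $\langle\KK[\{\varepsilon_0\}\times F_0],\in,F_0\rangle$ via {\cite[Theorem 3.16]{MR926749}}, runs the fixed-point argument there to force $\varepsilon_0=\delta_0$, and then quotes the uniqueness theorem {\cite[Theorem 3.14]{MR926749}} to conclude $\KK[D]=\KK[E]$ and $F_0=U_0$; the varying upper measures $(\varepsilon_1,F_1)$ (for instance those arising from iterates of $U_1$) never enter the comparison at all.
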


\begin{proof}
 By {\cite[Theorem 2.7]{MR926749}}, there is a finite fragment $\sf F$ of $\ZFC$ such that  the statement \anf{\emph{$M$ is a $D$-mouse}} is absolute between $\VV$ and transitive models of $\sf F$ containing $\kappa$ and $M$. This shows that there is a $\Sigma_1$-formula $\Phi(v_0,\ldots,v_5)$ with the property that the statement $\Phi(M,\varepsilon_0,\varepsilon_1,F_0,F_1,\nu)$ is equivalent to the conjunction of the following statements: 
 \begin{enumerate}
  \item $M$ is a $(\{\varepsilon_0\}\times F_0)$-mouse with $\nu,F_0,F_1\in\betrag{M}$ and $F_M=\{\varepsilon_1\}\times F_1$. 

  \item $\betrag{M}$ is a model of $\ZFC^-$. 
  
  \item $\varepsilon_0<\nu<\varepsilon_1$ and  $\lp{M}$ contains a strictly increasing cofinal function from $\varepsilon_0$ to $\nu$. 
 \end{enumerate}
  Then our assumptions imply that $\Phi(M,\delta_0,\delta_1,U_0,U_1,\nu)$ holds for some set $M$.

 \begin{claim*}
  If $\Phi(M,\varepsilon_0,\varepsilon_1,F_0,F_1,\nu)$ holds and $D=\{\varepsilon_0\}\times F_0$, then $\HHH{\nu}^{\KK[D]}\subseteq\betrag{M}$.
 \end{claim*}

 \begin{proof}[Proof of the Claim]
  Pick $x\in\HHH{\nu}^{\KK[D]}$. Since $\nu$ is a strong limit cardinal, the definition of $\KK[D]$ yields a $D$-mouse $N$ such that $\betrag{N}$ has cardinality less than $\nu$ and $\lp{N}$ contains a surjection $\map{s}{\xi}{\tc{\{x\}}}$ with $\xi<\nu$. By {\cite[Theorem 2.12]{MR926749}}, there is a simple $E$ and iterated ultrapowers $\map{j_0}{M}{M_*}$ and $\map{j_1}{N}{N_*}$ such that $\betrag{M_*}=\JJ_{\alpha(M_*)}[D,E]$ and $\betrag{N_*}=\JJ_{\alpha(N_*)}[D,E]$. Moreover, {\cite[Lemma 2.4]{MR926749}} implies that the corresponding embeddings $\map{j_0}{\langle\betrag{M},\in,D,F_M\rangle}{\langle\betrag{M_*},\in,D,E\rangle}$ and $\map{j_1}{\langle\betrag{N},\in,D,F_N\rangle}{\langle\betrag{N_*},\in,D,E\rangle}$ are both $\Sigma_1$-elementary.  Hence we can find $\zeta\in\betrag{N}$ with $\zeta=\min{(\meas{N})}$ and $j_0(\varepsilon_1)=j_1(\zeta)$. Since we have $$\zeta ~ = ~ \min{(\meas{N})} ~ < ~ \nu ~ < ~ \varepsilon_1 ~ \leq ~ j_0(\varepsilon_1) ~ = ~ j_1(\zeta)$$ and the $\GCH$ in $\LL[U_0,U_1]$ implies that $\nu$ is a strong limit cardinal, we know that $\nu$ appears as an image of $\zeta$ in a model in the sequence given by the above iterated ultrapower of $N$. In particular, $\nu$ is regular in $\betrag{N_*}$. Since $\lp{M}$ contains a strictly increasing cofinal function from $\varepsilon_0$ to $\nu$ and {\cite[Lemma 2.4]{MR926749}} implies that $\lp{M_*}$ also contains such a function, we can conclude that $\alpha(N_*)<\alpha(M_*)$. In this situation, we can conclude that $s\in\betrag{M}$, because {\cite[Lemma 2.4]{MR926749}} shows that $j_1\restriction\lp{N}=\id_{\lp{N}}$ and $\lp{N}\subseteq\HHH{\nu}^{\betrag{M_*}}=\HHH{\nu}^{\betrag{M}}$. 
\end{proof}

 \begin{claim*}
  If $\Phi(M,\varepsilon_0,\varepsilon_1,F_0,F_1,\nu)$ holds and $D=\{\varepsilon_0\}\times F_0$, then $\varepsilon_0$ is a measurable cardinal in $\KK[D]$,  $F_0$ is a normal ultrafilter on $\varepsilon_0$ in $\KK[D]$ and $\cof{\nu}^{\KK[D]}=\varepsilon_0$.
 \end{claim*}

 \begin{proof}[Proof of the Claim]
  Our assumptions imply that the first two statements hold in $\HHH{\nu}^{\betrag{M}}$ and therefore the above claim shows that they also hold in $\HHH{\nu}^{\KK[D]}$ and hence also in $\KK[D]$.  Now, assume, towards a contradiction, that $\cof{\nu}^{\KK[D]}<\varepsilon_0$. Since {\cite[Theorem 3.2]{MR926749}} shows that $\KK[D]$ is a model of $\ZFC$ and $\lp{M}\subseteq\KK[D]$, this assumption implies that $\varepsilon_0$ is singular in $\KK[D]$ and this contradicts the above conclusions.  
 \end{proof}

 \begin{claim*} 
  If $\Phi(M,\varepsilon_0,\varepsilon_1,F_0,F_1,\nu)$ holds, then $\delta_0=\varepsilon_0$ and $F_0=U_0$. 
\end{claim*} 

\begin{proof}[Proof of the Claim]
 Set $D=\{\delta_0\}\times U_0$ and $E=\{\varepsilon_0\}\times F_0$. First, assume, towards a contradiction, that $\delta_0\neq\varepsilon_0$. Then $\delta_0<\varepsilon_0$, because $\delta_0=\cof{\nu}\leq\cof{\nu}^{\KK[E]}=\varepsilon_0$. Since $\VV=\LL[U_0,U_1]$, we can apply {\cite[Theorem 2.14]{MR926749}} to show that the assumptions of {\cite[Theorem 3.16]{MR926749}} are satisfied. Since both $D$ and $E$ code measure sequences of length $1$, this result shows that $\langle\KK[E],\in,F_0\rangle$ is an iterate of $\langle\KK[D],\in,U_0\rangle$ given by an iteration of length $0<\tau<\nu$. Let $\map{j}{\KK[D]}{\KK[E]}$ denote the corresponding elementary embedding $j(\delta_0)=\varepsilon_0$ and $j(U_0)=F_0$. Then {\cite[Corollary 19.7]{MR1994835}} implies that $j(\mu)<\nu$ holds for all $\mu<\nu$ and there is a continuous, cofinal and strictly increasing map $\map{c}{\delta_0}{\nu}$ in $\KK[D]$ with the property that $j(c(\gamma))=c(\gamma)$ holds for every $\gamma<\delta_0$. But then $c=j(c)\restriction\delta_0\in\KK[E]$ and $\cof{\nu}^{\KK[E]}\leq\delta_0<\varepsilon_0$, contradicting the above claim. 
 
 The above computations show that $\delta_0=\varepsilon_0$. In this situation, we can apply {\cite[Theorem 3.14]{MR926749}} to conclude that $\KK[D]=\KK[E]$ and $F_0=U_0$.  
\end{proof}

The above claim shows that the filter $U_0$ is the unique set $F_0$ such that there are $M$, $\varepsilon_0$, $\varepsilon_1$ and $F_1$ with the property that $\Phi(M,\varepsilon_0,\varepsilon_1,F_0,F_1,\nu)$ holds. This allows us to conclude that the set $\{U_0\}$ is definable by a $\Sigma_1$-formula with parameter $\nu$. 
\end{proof}

 Next, we also generalize Lemma \ref{lemma:GoodWOSuccSingular} to the two-measurable-cardinals setting.

\begin{lemma}\label{lemma:TwoMeasurablesSuccSingluarCard}
  In the setting of Theorem \ref{theorem:TwoMeasurables}, if $\delta_0<\mu<\delta_1$ is a cardinal with $\cof{\mu}=\delta_0$, then the set $\{U_0\}$ is $\Sigma_1(\mu^+)$-definable. 
\end{lemma}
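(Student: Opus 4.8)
The plan is to imitate the proof of Lemma \ref{lemma:GoodWOSuccSingular}, but to carry out the reconstruction of $U_0$ inside the short core model framework used in the proof of Lemma \ref{lemma:TwoMeasurablesSingluarCard}, since the definability now has to be witnessed by $D$-mice rather than by initial segments of a single model $\LL[U]$. Set $\kappa=\mu^+$ and $D=\{\delta_0\}\times U_0$, so that $\VV=\KK[D]$ holds. First I would set up the iterated ultrapower $\langle\seq{N_\gamma}{\gamma<\kappa},\seq{j_{\beta,\gamma}}{\beta\leq\gamma<\kappa}\rangle$ of $\langle\VV,\in,U_0\rangle$ by the \emph{lower} measure, put $\delta_\gamma=j_{0,\gamma}(\delta_0)$ and $\mu_\gamma=j_{1,\gamma}(\mu)$, and prove the exact analog of the first claim in Lemma \ref{lemma:GoodWOSuccSingular}: for every $0<\gamma<\kappa$ one has $\cof{\mu_\gamma}^{N_\gamma}=\delta_0$ and $\kappa=(\mu_\gamma^+)^{N_\gamma}=j_{1,\gamma}(\kappa)$. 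The induction is identical to the one given there, using that the $\GCH$ holds in $\LL[U_0,U_1]$ (see \cite[Theorem 3.6]{MR0344123}) and that $N_1=\Ult{\VV}{U_0}$ is closed under $\delta_0$-sequences. The one new point to check is that, since $\delta_1$ is inaccessible and $\kappa<\delta_1$, the critical points $\delta_\gamma$ stay below $\delta_1$ for all $\gamma<\kappa$; hence $j_{0,\gamma}(\delta_1)=\delta_1$ and $j_{0,\gamma}(U_1)=U_1$, so that each $N_\gamma$ is again of the form $\LL[F_\gamma,U_1]$ with the upper measure undisturbed and the computations around $\mu<\kappa$ unaffected by $U_1$.

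Next I would write, using the absoluteness of \anf{\emph{$M$ is a $D$-mouse}} from \cite[Theorem 2.7]{MR926749}, a $\Sigma_1$-formula $\Psi(M,\varepsilon_0,\varepsilon_1,F_0,F_1,\nu,\kappa)$ expressing that $M$ is a $(\{\varepsilon_0\}\times F_0)$-mouse with $\kappa,F_0,F_1\in\betrag{M}$ and $F_M=\{\varepsilon_1\}\times F_1$, that $\betrag{M}$ is a model of $\ZFC^-$, that $\varepsilon_0<\nu<\kappa<\varepsilon_1$ with $\nu$ a cardinal of $M$ satisfying $\cof{\nu}^M=\varepsilon_0$ and $\kappa=(\nu^+)^M$, and that $\lp{M}$ contains a strictly increasing cofinal function from $\varepsilon_0$ to $\nu$. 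As before, $\Psi(M,\delta_0,\delta_1,U_0,U_1,\mu,\kappa)$ will hold for a sufficiently tall initial segment $M$ of $\KK[D]$. The $\Sigma_1(\kappa)$-definability of $\{U_0\}$ will then follow once I show that $U_0$ is the unique $F_0$ admitting $M,\varepsilon_0,\varepsilon_1,F_1,\nu$ with $\Psi(M,\varepsilon_0,\varepsilon_1,F_0,F_1,\nu,\kappa)$.

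To prove this uniqueness I would establish the three consequence-claims in parallel with the proof of Lemma \ref{lemma:TwoMeasurablesSingluarCard}, writing $E=\{\varepsilon_0\}\times F_0$. The first claim, $\HHH{\kappa}^{\KK[E]}\subseteq\betrag{M}$, is proved by the same coiteration argument: any element of $\HHH{\kappa}^{\KK[E]}$ is carried by an $E$-mouse $N$ of size at most $\nu<\kappa$, one coiterates $M$ and $N$ to a common $\JJ$-structure using \cite[Theorem 2.12]{MR926749} and \cite[Lemma 2.4]{MR926749}, and the cofinal function from $\varepsilon_0$ to $\nu$ in $\lp{M}$ forces $\alpha(N_*)<\alpha(M_*)$, whence the element lies in $\betrag{M}$. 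The second claim then transfers the measurability of $\varepsilon_0$, the normality of $F_0$, the cofinality statement $\cof{\nu}^{\KK[E]}=\varepsilon_0$ (using the regularity of $\varepsilon_0$ in $\KK[E]$, as in Lemma \ref{lemma:TwoMeasurablesSingluarCard}) and the successor statement $\kappa=(\nu^+)^{\KK[E]}$ from $M$ to $\KK[E]$. For the third claim I would apply \cite[Theorem 2.14]{MR926749} and \cite[Theorem 3.16]{MR926749} to conclude that $\langle\KK[E],\in,F_0\rangle$ is an iterate of $\langle\KK[D],\in,U_0\rangle$ of some length $0<\tau$; since iteration by the single lower measure is linear and $\VV=\KK[D]$, this iterate coincides with $N_\tau$, and $\varepsilon_0=\delta_\tau<\nu<\kappa$ forces $\tau<\kappa$. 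If $\varepsilon_0\neq\delta_0$, then $\tau>0$, and the preliminary claim gives $\cof{\mu_\tau}^{N_\tau}=\delta_0$ and $(\mu_\tau^+)^{N_\tau}=\kappa$; but the second claim gives $(\nu^+)^{N_\tau}=\kappa$, so $\mu_\tau=\nu$, whence $\cof{\nu}^{N_\tau}=\delta_0<\varepsilon_0=\cof{\nu}^{N_\tau}$, a contradiction. Therefore $\varepsilon_0=\delta_0$, and \cite[Theorem 3.14]{MR926749} then yields $\KK[E]=\KK[D]$ and $F_0=U_0$.

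I expect the main obstacle to be the preliminary claim, specifically the bookkeeping that the iteration of the lower measure $U_0$ of length $\kappa$ simultaneously preserves the successor and cofinality configuration of $\mu$ and leaves $\delta_1$ and $U_1$ fixed, so that the iterates $N_\gamma$ remain genuine short core models to which Koepke's comparison theorem \cite[Theorem 3.16]{MR926749} applies. Getting the length $\tau$ of the comparison iteration to lie below $\kappa$, so that the preliminary claim is available at $\tau$, is the step where the two pieces have to be glued together with care.
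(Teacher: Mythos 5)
Your proposal reproduces the paper's architecture (the formula $\Psi$, the three consequence-claims, the comparison endgame via Koepke's Theorems 2.7, 2.12, 3.16 and 3.14), but two steps fail as written. First, the premise $\VV=\KK[D]$ is false: in $\LL[U_0,U_1]$ the top measure satisfies $U_1\notin\Ult{\VV}{U_1}$ and $\KK[D]$ is contained in all iterates by the top measure, so $U_1\notin\KK[D]$ and only $\HHH{\delta_1}\subseteq\KK[D]$ holds. This breaks the proposal at exactly the gluing point you flagged: Theorem 3.16 exhibits $\langle\KK[E],\in,F_0\rangle$ as an iterate of $\langle\KK[D],\in,U_0\rangle$, whereas your $N_\tau$ is an iterate of $\langle\VV,\in,U_0\rangle$; such an iterate is a model of the form $\LL[W_\tau,\,\cdot\,]$ with a measurable cardinal above $\kappa$ and can never equal the short core model $\KK[E]$, so the identification of the comparison iterate with $N_\tau$ is simply wrong. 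The paper avoids this by iterating $\langle\KK[D],\in,U_0\rangle$ from the outset, using $\HHH{\delta_1}\subseteq\KK[D]$ to see $\cof{\mu}^{\KK[D]}=\delta_0$ and $\kappa=(\mu^+)^{\KK[D]}$, so that the preliminary claim from Lemma \ref{lemma:GoodWOSuccSingular} relativizes; your bookkeeping about $\delta_1$ and $U_1$ being fixed then becomes unnecessary. Relatedly, you never rule out the other alternative of Theorem 3.16, namely that $\langle\KK[D],\in,U_0\rangle$ is an iterate of $\langle\KK[E],\in,F_0\rangle$; the paper does this by the case split $\mu=\nu$ (cofinality mismatch at $\mu$) versus $\mu<\nu$ (then $\nu$ is not a cardinal in $\KK[D]$).

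Second, your proof of $\HHH{\kappa}^{\KK[E]}\subseteq\betrag{M}$ imports the mechanism of Lemma \ref{lemma:TwoMeasurablesSingluarCard} — regularity of $\nu$ in $\betrag{N_*}$ against the cofinal map from $\varepsilon_0$ to $\nu$ in $\lp{M_*}$ — but that mechanism is unavailable here: $\nu$ is now an existentially quantified ordinal that is merely a cardinal of $M$, not (as in Lemma \ref{lemma:TwoMeasurablesSingluarCard}) the actual parameter, which there was a strong limit cardinal of $\VV$ and hence an iteration point. There is no reason that $\nu$ appears as an image of $\zeta=\min{(\meas{N})}$ along the iteration of $N$, so no reason that $\nu$ is regular in $\betrag{N_*}$, and the claimed contradiction evaporates. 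The correct pivot, used by the paper, is $\kappa$ itself: since $\kappa$ is regular in $\VV$ and $\betrag{N}$ has size less than $\kappa$, the ordinal $\kappa$ does appear as an image of $\zeta$ and is therefore inaccessible in $\betrag{N_*}$, while $\kappa=j_0(\kappa)=(\nu^+)^{\betrag{M_*}}$ is a successor cardinal in $\betrag{M_*}$; this clash forces $\alpha(N_*)<\alpha(M_*)$. You already built $\kappa=(\nu^+)^{\betrag{M}}$ into $\Psi$, so the repair is at hand, but as written this step fails. Your remaining pieces — the transfer of measurability, normality, cofinality and the successor computation to $\KK[E]$, the final two-way cofinality computation forcing $\delta_0=\varepsilon_0$, and the appeal to Theorem 3.14 for $F_0=U_0$ — match the paper's proof.
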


\begin{proof}
 Set $\kappa=\mu^+$ and $D=\{\delta_0\}\times U_0$. Then our assumptions imply that $\KK[D]$ is a model of $\ZFC$, $U_0\in\KK[D]$ is a normal ultrafilter on $\delta_0$ and  $\HHH{\delta_1}\subseteq\KK[D]$. In particular, $\cof{\mu}^{\KK[D]}=\delta_0$ and $\kappa=(\mu^+)^{\KK[D]}$. Let $$\langle\seq{\langle N_\gamma,\in,W_\gamma\rangle}{\gamma<\kappa},\seq{\map{j_{\beta,\gamma}}{N_\beta}{N_\gamma}}{\beta\leq\gamma<\kappa}\rangle$$ denote the system of iterated ultrapowers of $\langle\KK[D],\in,U_0\rangle$. Since $\mu$ is a strong limit cardinal greater than $\delta_0$, we have $j_{0,1}(\delta_0)<\mu$ and hence $j_{0,\gamma}(\delta_0)<j_{1,\gamma}(\mu)$ for all $0<\gamma<\kappa$. The following statement is shown in the proof of Lemma \ref{lemma:GoodWOSuccSingular}.

 \begin{claim*}
  $\cof{j_{1,\gamma}(\mu)}^{N_\gamma}=\delta_0$ and $\kappa=(j_{1,\gamma}(\mu)^+)^{N_\gamma}=j_{1,\gamma}(\kappa)$ for all $0<\gamma<\kappa$. \qed 
 \end{claim*}

 Using {\cite[Theorem 2.7]{MR926749}}, we find a $\Sigma_1$-formula $\Psi(v_0,\ldots,v_6)$ with the property that $\Psi(M,\varepsilon_0,\varepsilon_1,F_0,F_1,\nu,\kappa)$ is equivalent to the conjunction of the following statements: 
 \begin{enumerate}
    \item $M$ is a $(\{\varepsilon_0\}\times F_0)$-mouse with $\nu,\kappa,F_0,F_1\in\betrag{M}$ and $F_M=\{\varepsilon_1\}\times F_1$. 

  \item $\betrag{M}$ is a model of $\ZFC^-$. 
  
  \item $\varepsilon_0<\nu<\kappa<\varepsilon_1$, $\kappa=(\nu^+)^{\betrag{M}}$  and  $\lp{M}$ contains a strictly increasing cofinal function from $\varepsilon_0$ to $\nu$.  
 \end{enumerate}
 Note that our assumptions imply that the statement $\Psi(M,\delta_0,\delta_1,U_0,U_1,\mu,\kappa)$ holds for some set $M$.

 \begin{claim*}
  If $\Psi(M,\varepsilon_0,\varepsilon_1,F_0,F_1,\nu,\kappa)$ holds and $E=\{\varepsilon_0\}\times F_0$, then $\HHH{\kappa}^{\KK[E]}\subseteq\betrag{M}$.
 \end{claim*}

 \begin{proof}[Proof of the Claim]
  Fix $x\in\HHH{\kappa}^{\KK[E]}$. Since $\kappa$ is a cardinal, there is an $E$-mouse $N$ with the property that $\betrag{N}$ has cardinality less than $\kappa$ and $\lp{N}$ contains a surjection $\map{s}{\xi}{\tc{\{x\}}}$ with $\xi<\kappa$. Then {\cite[Theorem 2.12]{MR926749}} yields a simple set $F$ and iterated ultrapowers $\map{j_0}{M}{M_*}$ and $\map{j_1}{N}{N_*}$ such that  $\betrag{M_*}=\JJ_{\alpha(M_*)}[E,F]$ and $\betrag{N_*}=\JJ_{\alpha(N_*)}[E,F]$. Using {\cite[Lemma 2.4]{MR926749}}, we find $\zeta\in\betrag{N}$ with $\zeta=\min{(\meas{N})}$ and $j_0(\varepsilon_1)=j_1(\zeta)$.  Since $\kappa$ is a regular cardinal, $\betrag{N}$ has cardinality less than $\kappa$ and $$\zeta ~ = ~ \min{(\meas{N})} ~ < ~ \kappa ~ < ~ \varepsilon_1 ~ \leq ~  j_0(\varepsilon_1) ~ = ~ j_1(\zeta), $$ we know that $\kappa$ appears as an image of $\zeta$ in a model in the sequence given by the above iterated ultrapower of $N$ and this implies that $\kappa$ is inaccessible in $\betrag{N_*}$. Since $\kappa=j_0(\kappa)$ is a successor cardinal in $\betrag{M_*}$, we can conclude that $\alpha(N_*)<\alpha(M_*)$. In particular, we know that  $s=j_1(s)\in\HHH{\kappa}^{\betrag{M_*}}=\HHH{\kappa}^{\betrag{M}}$.  
 \end{proof}

 Analogous to the proof of Lemma \ref{lemma:TwoMeasurablesSingluarCard}, we can derive the following statement from the above claim.

 \begin{claim*}
  If $\Psi(M,\varepsilon_0,\varepsilon_1,F_0,F_1,\nu,\kappa)$ holds and $E=\{\varepsilon_0\}\times F_0$, then $\varepsilon_0$ is a measurable cardinal in $\KK[E]$,  $F_0$ is a normal ultrafilter on $\varepsilon_0$ in $\KK[E]$, $\cof{\nu}^{\KK[E]}=\varepsilon_0$ and $\kappa=(\nu^+)^{\KK[E]}$. \qed
 \end{claim*}

 \begin{claim*} 
  If $\Psi(M,\varepsilon_0,\varepsilon_1,F_0,F_1,\nu,\kappa)$ holds, then $\delta_0=\varepsilon_0$ and $F_0=U_0$. 
\end{claim*} 

\begin{proof}[Proof of the Claim]
 Define $E=\{\varepsilon_0\}\times F_0$. Since $\kappa=(\nu^+)^{\KK[E]}$, we have $\mu\leq\nu$. 
 
Assume that $\delta_0\neq\varepsilon_0$. Then our assumptions and {\cite[Theorem 3.16]{MR926749}} show that either $\langle\KK[D],\in,U_0\rangle$ is an iterate of $\langle\KK[E],\in,F_0\rangle$ or $\langle\KK[E],\in,F_0\rangle$ is an iterate of $\langle\KK[D],\in,U_0\rangle$.  If $\mu=\nu$, then $\delta_0\neq\varepsilon_0$ implies  $\cof{\mu}^{\KK[D]}=\delta_0<\varepsilon_0=\cof{\mu}^{\KK[E]}$ and the second option holds. In the other case, if $\mu<\nu$, then $\nu$ is not a cardinal in $\KK[D]$ and the second option also holds. Hence we can find $0<\gamma\leq\varepsilon_0<\kappa$ with $\KK[E]=N_\gamma$, $\varepsilon_0=j_{0,\gamma}(\delta_0)$ and $F_0=W_\gamma$. By the above claims, we know that $j_{1,\gamma}(\mu)$ is a cardinal of cofinality $\delta_0$ in $\KK[E]$, $\nu$ is a cardinal of cofinality $\varepsilon_0$ in $\KK[E]$ and  $(\nu^+)^{\KK[E]}=\kappa=(j_{1,\gamma}(\mu)^+)^{\KK[E]}$. This yields $\nu=j_{1,\gamma}(\mu)$ and $\delta_0=\varepsilon_0$, a contradiction.

 The above computations show that $\delta_0=\varepsilon_0$. As above, this allows us to use  {\cite[Theorem 3.14]{MR926749}} to show that $\KK[D]=\KK[E]$ and $F_0=U_0$.  
\end{proof}

Using the above claims, we can conclude that  the filter $U_0$ is the unique set $F_0$ such that there are $M$, $\varepsilon_0$, $\varepsilon_1$,  $F_1$ and $\nu$ such that $\Psi(M,\varepsilon_0,\varepsilon_1,F_0,F_1,\nu,\kappa)$ holds and this implies that the set $\{U_0\}$ is definable by a $\Sigma_1$-formula with parameter $\kappa$. 
\end{proof}

\begin{proof}[Proof of Theorem \ref{theorem:TwoMeasurables}]
  Assume that there are measurable cardinals $\delta_0<\delta_1$ and $\VV=\LL[U_0,U_1]$ holds, where $U_0$ is a normal ultrafilter on $\delta_0$ and $U_1$ is a normal ultrafilter in $\delta_1$. Let $\kappa$ be an infinite cardinal. First, assume that there is a good $\Sigma_1(\kappa)$-wellordering of $\POT{\kappa}$. Then Lemma \ref{lemma:MeasurableWOReals} implies that $\kappa$ is uncountable and, since the $\GCH$ holds in $\LL[U_0,U_1]$,  Corollary \ref{corollary:SimplificationGCHNonMeasurable} implies that either $\omega<\kappa<\delta_0$ or there is a cardinal $\nu<\delta_1$ with $\cof{\nu}=\delta_0$ and $\nu\leq\kappa\leq\nu^+$.  Next, if $\omega<\kappa<\delta_0$, then Corollary  \ref{corollary:LowWOTwoMeasurables} shows that there is a good $\Sigma_1(\kappa)$-wellordering of $\POT{\kappa}$. Finally, assume that there is a cardinal $\nu<\delta_1$ with $\cof{\nu}=\delta_0$ and $\nu\leq\kappa\leq\nu^+$. Set $D=\{\delta_0\}\times U_0$. 
 Since $\HHH{\delta_1}\subseteq\Ult{\VV}{U_1}$, we have $\HHH{\delta_1}\subseteq\KK[D]$ and Lemma \ref{lemma:GoodWOinK} shows that there is a good $\Sigma_1(\kappa,U_0)$-wellordering of $\POT{\kappa}$. Since we can apply Lemma \ref{lemma:TwoMeasurablesSingluarCard} or Lemma \ref{lemma:TwoMeasurablesSuccSingluarCard} to see that the set $\{U_0\}$ is $\Sigma_1(\kappa)$-definable, we can conclude that there is a good $\Sigma_1(\kappa)$-wellordering of $\POT{\kappa}$ in this case. 
\end{proof}


\section{Open Questions}\label{section:questions}

We conclude this paper by stating some question raised by the above results.

 Our main results only talk about models of set theory in which the $\GCH$ holds. It is not clear whether a failure of the $\GCH$ together with the existence of a measurable cardinal might impose more restrictions on the existence of good $\Sigma_1$-wellorderings than the ones given by the results of Section \ref{section:intro}. In particular, it is natural to consider the following question.

\begin{question}
 Is it consistent that there is a measurable cardinal $\delta$ with $2^\delta>\delta^+$ and there is a good $\Sigma_1(\delta^+)$-wellordering of $\POT{\delta^+}$?
\end{question}

Next, we consider slightly less simply definable wellorderings. The above proofs of the nonexistence of good $\Sigma_1$-wellorderings in $\LL[U]$  leave open the following question.

\begin{question}
 Assume that $\VV=\LL[U]$ for some normal ulterafilter $U$ on a measurable cardinal. If $\kappa$ is an uncountable cardinal, is there a $\Sigma_1(\kappa)$-definable wellordering of $\POT{\kappa}$?
\end{question}

The results of \cite{Lightface} mentioned in Section \ref{section:intro} show that certain large cardinals imply the nonexistence of good $\Sigma_1$-wellorderings at $\omega_1$ and the large cardinals themselves. It is not know whether similar implications also hold for other uncountable regular cardinals.

\begin{question}
 Is the existence of a good $\Sigma_1(\omega_2)$-wellordering of $\POT{\omega_2}$ consistent with the existence of a supercompact cardinal? 
\end{question}


\bibliographystyle{plain}
 \bibliography{references}


\end{document}